\newtheorem{theorem}{Theorem}
\theoremstyle{plain}
\newtheorem{corollary}{Corollary}
\newtheorem{definition}{Definition}
\newtheorem{example}{Example}
\newtheorem{lemma}{Lemma}
\newtheorem{proposition}{Proposition}
\newtheorem{remark}{Remark}
\numberwithin{equation}{section}
\begin{document}
\title{SLANT RIEMANNIAN SUBMERSIONS FROM SASAKIAN MANIFOLDS}
\author{I. K\"{u}peli Erken}
\address{Art and Science Faculty,Department of Mathematics, Uludag
University, 16059 Bursa, TURKEY}
\email{iremkupeli@uludag.edu.tr}
\author{C. Murathan}
\address{Art and Science Faculty,Department of Mathematics, Uludag
University, 16059 Bursa, TURKEY}
\email{cengiz@uludag.edu.tr}
\date{10.09.2013}
\subjclass[2000]{Primary 53C25, 53C43, 53C55; Secondary 53D15}
\keywords{Riemannian submersion, Sasakian manifold, Anti-invariant
submersion \ \ \ This paper is supported by Uludag University research
project (KUAP(F)-2012/57).}

\begin{abstract}
We introduce slant Riemannian submersions from Sasakian manifolds onto
Riemannian manifolds. We survey main results of slant Riemannian submersions
defined on Sasakian manifolds. We also give an example of \ such slant
submersions.
\end{abstract}

\maketitle

\section{\textbf{Introduction}}

Let $F$ be a $C^{\infty }$-submersion from a Riemannian manifold ($M,g_{M})$
onto a Riemannian manifold $(N,g_{N}).$ Then according to the conditions on
the map $F:(M,g_{M})\rightarrow (N,g_{N}),$ we have the following
submersions:

semi-Riemannian submersion and Lorentzian submersion \cite{FAL}, Riemannian
submersion (\cite{BO1}, \cite{GRAY}), slant submersion (\cite{CHEN}, \cite%
{SAHIN1}), almost Hermitian submersion \cite{WAT}, contact-complex
submersion \cite{IANUS}, quaternionic submersion \cite{IANUS2}, almost $h$%
-slant submersion and $h$-slant submersion \cite{PARK1}, semi-invariant
submersion \cite{SAHIN2}, $h$-semi-invariant submersion \cite{PARK2}, etc.
As we know, Riemannian submersions are related with physics and have their
applications in the Yang-Mills theory (\cite{BL}, \cite{WATSON}),
Kaluza-Klein theory (\cite{BOL}, \cite{IV}), Supergravity and superstring
theories (\cite{IV2}, \cite{MUS}). In \cite{SAHIN}, Sahin introduced
anti-invariant Riemannian submersions from almost Hermitian manifolds onto
Riemannian manifolds. He also suggested to investigate anti invariant
submersions from almost contact metric manifolds onto Riemannian manifolds
in \cite{SAHIN3}.

So the purpose of the present paper is to study similar problems for slant
Riemannian submersions from Sasakian manifolds to Riemannian manifolds. We
also want to carry anti-invariant submanifolds of Sasakian manifolds to
anti-invariant Riemannian submersion theory and to prove dual results for
submersions. For instance, a slant submanifold of a $K$-contact manifold is
an anti invariant submanifold if and only if $\nabla Q=0$ (see: Proposition
4.1 of \cite{alfonso}). We get similar result as Proposition 4. Thus, it
will be worth the study area which is anti-invariant submersions from almost
contact metric manifolds onto Riemannian manifolds.

The paper is organized as follows: In section 2, we present the basic
information about Riemannian submersions needed for this paper. In section
3, we mention about Sasakian manifolds. In section 4, we give definition of
slant Riemannian submersions and introduce slant Riemannian submersions from
Sasakian manifolds onto Riemannian manifolds. We survey main results of
slant submersions defined on Sasakian manifolds. We also give an example of
slant submersions such that characteristic vector field $\xi $ is vertical.

\section{\textbf{Riemannian Submersions}}

In this section we recall several notions and results which will be needed
throughout the paper.

Let $(M,g_{M})$ be an $m$-dimensional Riemannian manifold , let $(N,g_{N})$
be an $n$-dimensional Riemannian manifold. A Riemannian submersion is a
smooth map $F:M\rightarrow N$ which is onto and satisfies the following
axioms:

$S1$. $F$ has maximal rank.

$S2$. The differential $F_{\ast }$ preserves the lenghts of horizontal
vectors.

The fundamental tensors of a submersion were defined by O'Neill (\cite{BO1},%
\cite{BO2}). They are $(1,2)$-tensors on $M$, given by the formula:%
\begin{eqnarray}
\mathcal{T}(E,F) &=&\mathcal{T}_{E}F=\mathcal{H}\nabla _{\mathcal{V}E}%
\mathcal{V}F+\mathcal{V}\nabla _{\mathcal{V}E}\mathcal{H}F,  \label{AT1} \\
\mathcal{A}(E,F) &=&\mathcal{A}_{E}F=\mathcal{V}\nabla _{\mathcal{H}E}%
\mathcal{H}F+\mathcal{H}\nabla _{\mathcal{H}E}\mathcal{V}F,  \label{AT2}
\end{eqnarray}%
for any vector field $E$ and $F$ on $M.$ Here $\nabla $ denotes the
Levi-Civita connection of $(M,g_{M})$. These tensors are called
integrability tensors for the Riemannian submersions. Note that we denote
the projection morphism on the distributions ker$F_{\ast }$ and (ker$F_{\ast
})^{\perp }$ by $\mathcal{V}$ and $\mathcal{H},$ respectively. The following
\ Lemmas are well known (\cite{BO1},\cite{BO2}).

\begin{lemma}
For any $U,W$ vertical and $X,Y$ horizontal vector fields, the tensor fields 
$\mathcal{T}$, $\mathcal{A}$ satisfy:%
\begin{eqnarray}
i)\mathcal{T}_{U}W &=&\mathcal{T}_{W}U,  \label{TUW} \\
ii)\mathcal{A}_{X}Y &=&-\mathcal{A}_{Y}X=\frac{1}{2}\mathcal{V}\left[ X,Y%
\right] .  \label{TUW2}
\end{eqnarray}
\end{lemma}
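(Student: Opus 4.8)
The plan is to read both identities directly off the defining formulas \eqref{AT1} and \eqref{AT2} after restricting the arguments to the relevant distributions, and then to exploit two structural facts about Riemannian submersions: the vertical distribution $\ker F_{\ast}$ is integrable, and for a basic horizontal field the bracket with a vertical field remains vertical.

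For part $i)$, since $U,W$ are vertical we have $\mathcal{V}U=U$ and $\mathcal{H}W=0$, so \eqref{AT1} collapses to $\mathcal{T}_{U}W=\mathcal{H}\nabla_{U}W$, and likewise $\mathcal{T}_{W}U=\mathcal{H}\nabla_{W}U$. Subtracting and using that $\nabla$ is torsion-free gives $\mathcal{T}_{U}W-\mathcal{T}_{W}U=\mathcal{H}[U,W]$. Because $\ker F_{\ast}$ is the tangent distribution of the fibers $F^{-1}(q)$ it is involutive, hence $[U,W]$ is vertical and $\mathcal{H}[U,W]=0$; this proves $\mathcal{T}_{U}W=\mathcal{T}_{W}U$.

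For part $ii)$, the same reduction applied to \eqref{AT2} with $X,Y$ horizontal gives $\mathcal{A}_{X}Y=\mathcal{V}\nabla_{X}Y$, so torsion-freeness immediately yields the skew part $\mathcal{A}_{X}Y-\mathcal{A}_{Y}X=\mathcal{V}[X,Y]$. It then remains to show that the symmetric part vanishes, that is $\mathcal{A}_{X}Y+\mathcal{A}_{Y}X=0$; granting this and adding it to the previous identity gives $2\mathcal{A}_{X}Y=\mathcal{V}[X,Y]$, from which the full statement follows. Since $\mathcal{A}$ is a tensor, it suffices to check $\mathcal{A}_{X}Y+\mathcal{A}_{Y}X=0$ for basic fields $X,Y$. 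For such fields $g_{M}(X,Y)$ is constant along each fiber, so for any vertical $V$ one has $g_{M}(\nabla_{V}X,Y)+g_{M}(X,\nabla_{V}Y)=V\,g_{M}(X,Y)=0$. On the other hand, using $g_{M}(Y,V)=0$ together with the fact that $[X,V]$ is vertical for basic $X$ (because $F_{\ast}[X,V]=[F_{\ast}X,F_{\ast}V]=0$), I get $g_{M}(\nabla_{X}Y,V)=-g_{M}(Y,\nabla_{X}V)=-g_{M}(Y,\nabla_{V}X)$, and symmetrically $g_{M}(\nabla_{Y}X,V)=-g_{M}(X,\nabla_{V}Y)$. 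Adding these two and substituting the first relation shows $g_{M}\big(\mathcal{V}(\nabla_{X}Y+\nabla_{Y}X),V\big)=0$ for every vertical $V$, hence $\mathcal{A}_{X}Y+\mathcal{A}_{Y}X=0$.

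The only genuinely non-routine point is this vanishing of the symmetric part of $\mathcal{A}$ on horizontal vectors; everything else is bookkeeping with the projections $\mathcal{V},\mathcal{H}$ and the torsion-freeness of $\nabla$. I expect the main obstacle to be organizing that step cleanly --- in particular, isolating the two auxiliary facts (tensoriality of $\mathcal{A}$, so that one may pass to basic fields, and the verticality of $[X,V]$ for basic $X$) and stating them before assembling the computation.
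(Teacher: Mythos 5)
Your argument is correct and is exactly the classical O'Neill proof: symmetry of $\mathcal{T}$ from integrability of the fibers, and the alternation of $\mathcal{A}$ on horizontal vectors via the constancy of $g_{M}(X,Y)$ along fibers for basic $X,Y$ together with the verticality of $[X,V]$. The paper itself offers no proof --- it states the lemma as well known and cites O'Neill --- so there is nothing to contrast with; your write-up supplies precisely the standard argument from the cited source, and the one step you flag as non-routine (vanishing of the symmetric part of $\mathcal{A}$) is indeed the only place where real work happens.
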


It is easy to see that $\mathcal{T}$ $\ $is vertical, $\mathcal{T}_{E}=%
\mathcal{T}_{\mathcal{V}E}$ and $\mathcal{A}$ is horizontal, $\mathcal{A=A}_{%
\mathcal{H}E}$.

For each $q\in N,$ $F^{-1}(q)$ is an $(m-n)$ dimensional submanifold of $M$.
The submanifolds $F^{-1}(q),$ $q\in N,$ are called fibers. A vector field on 
$M$ is called vertical if it is always tangent to fibers. A vector field on $%
M$ is called horizontal if it is always orthogonal to fibers. A vector field 
$X$ on $M$ is called basic if $X$ is horizontal and $F$-related to a vector
field $X$ on $N,$ i. e., $F_{\ast }X_{p}=X_{\ast F(p)}$ for all $p\in M.$

\begin{lemma}
Let $F:(M,g_{M})\rightarrow (N,g_{N})$ be a Riemannian submersion. If $\ X,$ 
$Y$ are basic vector fields on $M$, then:
\end{lemma}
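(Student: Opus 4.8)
The plan is to prove the four (standard) assertions of this lemma --- (i) $g_{M}(X,Y)=g_{N}(X_{\ast },Y_{\ast })\circ F$; (ii) $\mathcal{H}[X,Y]$ is basic and $F$-related to $[X_{\ast },Y_{\ast }]$; (iii) $\mathcal{H}\nabla _{X}Y$ is basic and $F$-related to $\nabla ^{N}_{X_{\ast }}Y_{\ast }$; and (iv) $\mathcal{A}_{X}Y=\frac{1}{2}\mathcal{V}[X,Y]$ --- in order of increasing difficulty, the main tool being the Koszul formula for the Levi--Civita connections of $g_{M}$ and $g_{N}$. Two of these are essentially free. Assertion (i) is axiom $S2$ read through polarization: since $X_{p},Y_{p}$ are horizontal and $F_{\ast }$ restricts to a linear isometry on the horizontal space, $g_{M}(X,Y)_{p}=g_{N}(F_{\ast }X_{p},F_{\ast }Y_{p})=g_{N}(X_{\ast },Y_{\ast })_{F(p)}$. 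Assertion (iv) is just the identity \eqref{TUW2} of Lemma 1 applied to the horizontal fields $X,Y$, together with $\mathcal{A}=\mathcal{A}_{\mathcal{H}E}$.

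For (ii), recall that if $X$ is $F$-related to $X_{\ast }$ and $Y$ to $Y_{\ast }$, then $[X,Y]$ is $F$-related to $[X_{\ast },Y_{\ast }]$. Since $F_{\ast }$ annihilates the vertical distribution, $F_{\ast }(\mathcal{H}[X,Y])=F_{\ast }[X,Y]=[X_{\ast },Y_{\ast }]\circ F$, while $\mathcal{H}[X,Y]$ is horizontal by definition; hence it is exactly the basic lift of $[X_{\ast },Y_{\ast }]$. For (iii), I would take an arbitrary basic vector field $Z$ and write the Koszul formula
\[
2g_{M}(\nabla _{X}Y,Z)=Xg_{M}(Y,Z)+Yg_{M}(X,Z)-Zg_{M}(X,Y)+g_{M}([X,Y],Z)-g_{M}([X,Z],Y)-g_{M}([Y,Z],X).
\]
By (i), each of the first three terms is the pullback under $F$ of the corresponding term of the Koszul formula for $\nabla ^{N}_{X_{\ast }}Y_{\ast }$, e.g. $Xg_{M}(Y,Z)=X\bigl(g_{N}(Y_{\ast },Z_{\ast })\circ F\bigr)=\bigl(X_{\ast }g_{N}(Y_{\ast },Z_{\ast })\bigr)\circ F$; and since $Z$ is horizontal, $g_{M}([X,Y],Z)=g_{M}(\mathcal{H}[X,Y],Z)$, which by (ii) equals $g_{N}([X_{\ast },Y_{\ast }],Z_{\ast })\circ F$, and likewise for the remaining two bracket terms. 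Thus $g_{M}(\nabla _{X}Y,Z)=g_{N}(\nabla ^{N}_{X_{\ast }}Y_{\ast },Z_{\ast })\circ F$ for every basic $Z$; because basic fields span the horizontal space at each point, this identifies $\mathcal{H}\nabla _{X}Y$ with the basic lift of $\nabla ^{N}_{X_{\ast }}Y_{\ast }$, which proves (iii).

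I expect (iii) to be the only genuine obstacle. One must be careful that it is legitimate to replace $[X,Y]$ by $\mathcal{H}[X,Y]$ inside $g_{M}(\,\cdot\,,Z)$ --- this is precisely where horizontality of the basic field $Z$ enters --- and that differentiating the pulled-back functions coming from (i) really commutes with $F$ as claimed, which uses that $X$ is $F$-related to $X_{\ast }$. The companion fact that $[V,X]$ is vertical whenever $V$ is vertical and $X$ is basic (because $V$ is $F$-related to $0$) is not needed for this lemma, since all four slots in the Koszul computation are taken basic, but it is the statement one would invoke afterwards to handle $\mathcal{V}\nabla _{X}Y$ and the mixed terms.
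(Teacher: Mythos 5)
The paper offers no proof of this lemma at all --- it is stated as well known and attributed to O'Neill (\cite{BO1}, \cite{BO2}) --- so there is nothing to compare against except the standard argument, which is exactly what you give. Your proofs of (i), (ii) and (iii) are correct: polarization of axiom $S2$ for (i), preservation of $F$-relatedness under Lie brackets plus the fact that $F_{\ast}$ annihilates vertical vectors for (ii), and the Koszul formula tested against basic fields for (iii); the two subtleties you flag (replacing $[X,Y]$ by $\mathcal{H}[X,Y]$ inside $g_{M}(\cdot ,Z)$ for horizontal $Z$, and pushing the derivative of a pulled-back function through $F$ via $F$-relatedness) are indeed the only places where care is needed, and you handle both. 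The one slip is organizational rather than mathematical: item (iv) of this lemma is not $\mathcal{A}_{X}Y=\frac{1}{2}\mathcal{V}[X,Y]$ (that is identity (\ref{TUW2}) of Lemma 1, which needs its own proof from the definition (\ref{AT2}) and is not free here), but rather the statement that $[X,V]$ is vertical for $V$ vertical --- precisely the "companion fact" you dismiss in your last paragraph as not needed. Fortunately you also supply its one-line proof there ($V$ is $F$-related to $0$, so $[X,V]$ is $F$-related to $[X_{\ast},0]=0$ and hence killed by $F_{\ast}$), so the content is complete; you should simply relabel that remark as the proof of (iv) and drop the claim about $\mathcal{A}_{X}Y$, which belongs to the previous lemma.
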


$i)$ $g_{M}(X,Y)=g_{N}(X_{\ast },Y_{\ast })\circ F,$

$ii)$ $\mathcal{H}[X,Y]$ is basic, $F$-related to $[X_{\ast },Y_{\ast }]$,

$iii)$ $\mathcal{H}(\nabla _{X}Y)$ is basic vector field corresponding to $%
\nabla _{X_{\ast }}^{^{\ast }}Y_{\ast }$ where $\nabla ^{\ast }$ is the
connection on $N.$

$iv)$ for any vertical vector field $V$, $[X,V]$ is vertical.

Moreover, if $X$ is basic and $U$ is vertical then $\mathcal{H}(\nabla
_{U}X)=\mathcal{H}(\nabla _{X}U)=\mathcal{A}_{X}U.$ On the other hand, from (%
\ref{AT1}) and (\ref{AT2}) we have%
\begin{eqnarray}
\nabla _{V}W &=&\mathcal{T}_{V}W+\hat{\nabla}_{V}W  \label{1} \\
\nabla _{V}X &=&\mathcal{H\nabla }_{V}X+\mathcal{T}_{V}X  \label{2} \\
\nabla _{X}V &=&\mathcal{A}_{X}V+\mathcal{V}\nabla _{X}V  \label{3} \\
\nabla _{X}Y &=&\mathcal{H\nabla }_{X}Y+\mathcal{A}_{X}Y  \label{4}
\end{eqnarray}%
for $X,Y\in \Gamma ((\ker F_{\ast })^{\bot })$ and $V,W\in \Gamma (\ker
F_{\ast }),$ where $\hat{\nabla}_{V}W=\mathcal{V}\nabla _{V}W.$

Notice that $\mathcal{T}$ \ acts on the fibres as the second fundamental
form of the submersion and restricted to vertical vector fields and it can
be easily seen that $\mathcal{T}=0$ is equivalent to the condition that the
fibres are totally geodesic. A Riemannian submersion is called a Riemannian
submersion with totally geodesic fiber if $\mathcal{T}$ $\ $vanishes
identically. Let $U_{1},...,U_{m-n}$ be an orthonormal frame of $\Gamma
(\ker F_{\ast }).$ Then the horizontal vector field $H$ $=\frac{1}{m-n}%
\dsum\limits_{j=1}^{m-n}\mathcal{T}_{U_{j}}U_{j}$ is called the mean
curvature vector field of the fiber. If \ $H$ $=0$ the Riemannian submersion
is said to be minimal. A Riemannian submersion is called a Riemannian
submersion with totally umbilical fibers if 
\begin{equation}
\mathcal{T}_{U}W=g_{M}(U,W)H  \label{4a}
\end{equation}%
for $U,W\in $ $\Gamma (\ker F_{\ast })$. For any $E\in \Gamma (TM),\mathcal{T%
}_{E\text{ }}$and $\mathcal{A}_{E}$ are skew-symmetric operators on $(\Gamma
(TM),g_{M})$ reversing the horizontal and the vertical distributions. By
Lemma 1 horizontally distribution $\mathcal{H}$ is integrable if and only if
\ $\mathcal{A=}0$. For any $D,E,G\in \Gamma (TM)$ one has%
\begin{equation}
g(\mathcal{T}_{D}E,G)+g(\mathcal{T}_{D}G,E)=0,  \label{4b}
\end{equation}%
\begin{equation}
g(\mathcal{A}_{D}E,G)+g(\mathcal{A}_{D}G,E)=0.  \label{4c}
\end{equation}

We recall the notion of harmonic maps between Riemannian manifolds. Let $%
(M,g_{M})$ and $(N,g_{N})$ be Riemannian manifolds and suppose that $\varphi
:M\rightarrow N$ is a smooth map between them. Then the differential $%
\varphi _{\ast }$ of $\varphi $ can be viewed a section of the bundle $\
Hom(TM,\varphi ^{-1}TN)\rightarrow M,$ where $\varphi ^{-1}TN$ is the
pullback bundle which has fibres $(\varphi ^{-1}TN)_{p}=T_{\varphi (p)}N,$ $%
p\in M.\ Hom(TM,\varphi ^{-1}TN)$ has a connection $\nabla $ induced from
the Levi-Civita connection $\nabla ^{M}$ and the pullback connection. Then
the second fundamental form of $\varphi $ is given by 
\begin{equation}
(\nabla \varphi _{\ast })(X,Y)=\nabla _{X}^{\varphi }\varphi _{\ast
}(Y)-\varphi _{\ast }(\nabla _{X}^{M}Y)  \label{5}
\end{equation}%
for $X,Y\in \Gamma (TM),$ where $\nabla ^{\varphi }$ is the pullback
connection. It is known that the second fundamental form is symmetric. If $%
\varphi $ is a Riemannian submersion it can be easily prove that 
\begin{equation}
(\nabla \varphi _{\ast })(X,Y)=0  \label{5a}
\end{equation}%
for $X,Y\in \Gamma ((\ker F_{\ast })^{\bot })$.A smooth map $\varphi
:(M,g_{M})\rightarrow (N,g_{N})$ is said to be harmonic if $trace(\nabla
\varphi _{\ast })=0.$ On the other hand, the tension field of $\varphi $ is
the section $\tau (\varphi )$ of $\Gamma (\varphi ^{-1}TN)$ defined by%
\begin{equation}
\tau (\varphi )=div\varphi _{\ast }=\sum_{i=1}^{m}(\nabla \varphi _{\ast
})(e_{i},e_{i}),  \label{6}
\end{equation}%
where $\left\{ e_{1},...,e_{m}\right\} $ is the orthonormal frame on $M$.
Then it follows that $\varphi $ is harmonic if and only if $\tau (\varphi
)=0 $, for details, \cite{B}.

\section{\textbf{Sasakian Manifolds}}

A $n$-dimensional differentiable manifold $M$ is said to have an almost
contact structure $(\phi ,\xi ,\eta )$ if it carries a tensor field $\phi $
of type $(1,1)$, a vector field $\xi $ and 1-form $\eta $ on $M$
respectively such that 
\begin{equation}
\phi ^{2}=-I+\eta \otimes \xi ,\text{ \ }\phi \xi =0,\text{ }\eta \circ \phi
=0,\text{ \ \ }\eta (\xi )=1,  \label{phi^2}
\end{equation}%
where $I$ denotes the identity tensor.

The almost contact structure is said to be normal if $N+d\eta \otimes \xi =0$%
, where $N$ is the Nijenhuis tensor of $\phi $. Suppose that a Riemannian
metric tensor $g$ is given in $M$ and satisfies the condition 
\begin{equation}
g(\phi X,\phi Y)=g(X,Y)-\eta (X)\eta (Y),\text{ \ \ }\eta (X)=g(X,\xi ).
\label{metric}
\end{equation}%
Then $(\phi ,\xi ,\eta ,g)$-structure is called an almost contact metric
structure. Define a tensor field $\Phi $ of type $(0,2)$ by $\Phi
(X,Y)=g(\phi X,Y)$. If $d\eta =\Phi $ then an almost contact metric
structure is said to be normal contact metric structure. A normal contact
metric structure is called a Sasakian structure, which satisfies 
\begin{equation}
(\nabla _{X}\phi )Y=g(X,Y)\xi -\eta (Y)X,  \label{Nambla fi}
\end{equation}%
where $\nabla $ denotes the Levi-Civita connection of $g$. For a Sasakian
manifold $M=M^{2n+1}$, it is known that 
\begin{equation}
R(\xi ,X)Y=g(X,Y)\xi -\eta (Y)X,  \label{R(xi,,X)Y}
\end{equation}%
\begin{equation}
S(X,\xi )=2n\eta (X)  \label{S(X,xi)}
\end{equation}%
and 
\begin{equation}
\nabla _{X}\xi =-\phi X.  \label{nablaXxi}
\end{equation}%
\cite{BL2}.

Now we will introduce a well known Sasakian manifold example on $%
\mathbb{R}
^{2n+1}.$

\begin{example}[\protect\cite{BL1}]
We consider $%
\mathbb{R}
^{2n+1}$ with Cartesian coordinates $(x_{i},y_{i},z)$ $(i=1,...,n)$ and its
usual contact form 
\begin{equation*}
\eta =\frac{1}{2}(dz-\dsum\limits_{i=1}^{n}y_{i}dx_{i}).
\end{equation*}%
The characteristic vector field $\xi $ is given by $2\frac{\partial }{%
\partial z}$ and its Riemannian metric $g$ and tensor field $\phi $ are
given by%
\begin{equation*}
g=\frac{1}{4}(\eta \otimes \eta
+\dsum\limits_{i=1}^{n}((dx_{i})^{2}+(dy_{i})^{2}),\text{ \ }\phi =\left( 
\begin{array}{ccc}
0 & \delta _{ij} & 0 \\ 
-\delta _{ij} & 0 & 0 \\ 
0 & y_{j} & 0%
\end{array}%
\right) \text{, \ }i=1,...,n
\end{equation*}%
This gives a contact metric structure on $%
\mathbb{R}
^{2n+1}$. The vector fields $E_{i}=2\frac{\partial }{\partial y_{i}},$ $%
E_{n+i}=2\left( \frac{\partial }{\partial x_{i}}+y_{i}\frac{\partial }{%
\partial z}\right) $, $\xi $ form a $\phi $-basis for the contact metric
structure. On the other hand, it can be shown that $%
\mathbb{R}
^{2n+1}(\phi ,\xi ,\eta ,g)$ is a Sasakian manifold.
\end{example}

\section{\textbf{Slant Riemannian submersions}}

\begin{definition}
Let $M(\phi ,\xi ,\eta ,g_{M})$ be a Sasakian manifold and $(N,g_{N})$ be a
Riemannian manifold. A Riemannian submersion $F:M(\phi ,\xi ,\eta
,g_{M})\rightarrow $ $(N,g_{N})$ is said to be slant if for any non zero
vector $X\in \Gamma (\ker F_{\ast })-\{\xi \}$, the angle $\theta (X)$
between $\phi X$ and the space $\ker F_{\ast }$ is a constant (which is
independent of the choice of $p\in M$ and of $X\in \Gamma (\ker F_{\ast
})-\{\xi \}$). The angle $\theta $ is called the slant angle of the slant
submersion. Invariant and anti-invariant submersions are slant submersions
with $\theta =0$ and $\theta =\pi /2$, respectively. A slant submersion
which is not invariant nor anti-invariant is called proper submersion.
\end{definition}

Now we will introduce an example.

\begin{example}
$%
\mathbb{R}
^{5}$ has got a Sasakian structure as in Example 1. Let $F:%
\mathbb{R}
^{5}\rightarrow 
\mathbb{R}
^{2}$ be a map defined by $F(x_{1},x_{2},y_{1},y_{2},z)=(x_{1}-2\sqrt{2}%
x_{2}+y_{1},2x_{1}-2\sqrt{2}x_{2}+y_{1})$. Then, by direct calculations 
\begin{equation*}
\ker F_{\ast }=span\{V_{1}=2E_{1}+\frac{1}{\sqrt{2}}E_{4},V_{2}=E_{2},V_{3}=%
\xi =E_{5}\}
\end{equation*}%
and 
\begin{equation*}
(\ker F_{\ast })^{\bot }=span\{H_{1}=2E_{1}-\frac{1}{\sqrt{2}}E_{4},\text{ }%
H_{2}=E_{3}\}.
\end{equation*}%
Then it is easy to see that $F$ is a Riemannian submersion. Moreover, $\phi
V_{1}=2E_{3}-\frac{1}{\sqrt{2}}E_{2}$ and $\phi V_{2}=E_{4}$ imply that $%
\left\vert g(\phi V_{1},V_{2})\right\vert =\frac{1}{\sqrt{2}}$. So $F$ is a
slant submersion with slant angle $\theta =\frac{\pi }{4}.$
\end{example}

In Example 2, we note that the characteristic vector field $\xi $ is
vertical. If $\xi $ is orthogonal to $\ker F_{\ast }$ we will give following
Theorem.

\begin{theorem}
Let $F$ be a slant Riemannian submersion from a Sasakian manifold $M(\phi
,\xi ,\eta ,g_{M})$ onto a Riemannian manifold $(N,g_{N})$. If $\xi $ is
orthogonal to $\ker F_{\ast }$, then $F$ is anti-invariant.
\end{theorem}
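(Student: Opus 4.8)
The plan is to show that the hypothesis $\xi\in\Gamma((\ker F_{\ast})^{\bot})$ forces the slant angle to equal $\pi/2$, i.e.\ that $\phi X\in\Gamma((\ker F_{\ast})^{\bot})$ for every $X\in\Gamma(\ker F_{\ast})$, which is precisely the assertion that $F$ is anti-invariant. First note two elementary consequences of $\xi$ being horizontal: every vertical vector field $X$ satisfies $\eta(X)=g_{M}(X,\xi)=0$, and, since $\eta\circ\phi=0$ by $(\ref{phi^2})$, also $g_{M}(\phi X,\xi)=\eta(\phi X)=0$. Moreover $(\ref{metric})$ together with $(\ref{phi^2})$ gives the skew-symmetry $g_{M}(\phi X,Y)=-g_{M}(X,\phi Y)$ for all $X,Y$ (replace $Y$ by $\phi Y$ and use $\phi^{2}=-I+\eta\otimes\xi$, $\eta\circ\phi=0$).

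The main step is to produce a formula for the vertical part of $\phi X$ by testing $\nabla_{X}\xi$ against vertical vector fields. Fix $X,Y\in\Gamma(\ker F_{\ast})$. Since $g_{M}(\xi,Y)=0$ and $\xi$ is horizontal, metric compatibility of $\nabla$ together with the decomposition $(\ref{1})$, namely $\nabla_{X}Y=\mathcal{T}_{X}Y+\hat{\nabla}_{X}Y$ with $\hat{\nabla}_{X}Y$ vertical, yields
\[
g_{M}(\nabla_{X}\xi,Y)=-g_{M}(\xi,\nabla_{X}Y)=-g_{M}(\xi,\mathcal{T}_{X}Y).
\]
On the other hand, $(\ref{nablaXxi})$ gives $g_{M}(\nabla_{X}\xi,Y)=-g_{M}(\phi X,Y)$. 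Comparing the two expressions, we obtain $g_{M}(\phi X,Y)=g_{M}(\xi,\mathcal{T}_{X}Y)$ for all vertical $X,Y$.

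Now I would symmetrize this identity. Interchanging $X$ and $Y$ and using the symmetry $\mathcal{T}_{X}Y=\mathcal{T}_{Y}X$ from $(\ref{TUW})$ leaves the right-hand side unchanged, so $g_{M}(\phi X,Y)=g_{M}(\phi Y,X)$; but the skew-symmetry of $\phi$ gives $g_{M}(\phi Y,X)=-g_{M}(\phi X,Y)$. Hence $g_{M}(\phi X,Y)=-g_{M}(\phi X,Y)$, so $g_{M}(\phi X,Y)=0$ for all $X,Y\in\Gamma(\ker F_{\ast})$. Together with $g_{M}(\phi X,\xi)=0$ this shows $\phi(\ker F_{\ast})\subseteq(\ker F_{\ast})^{\bot}$, so the slant angle is $\pi/2$ and $F$ is anti-invariant.

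The computations are short once the right pairing is identified; the only genuine insight needed is to contract $\nabla_{X}\xi=-\phi X$ with a vertical field and to observe that the symmetry of O'Neill's tensor $\mathcal{T}$ and the skew-symmetry of $\phi$ cannot coexist unless the vertical component of $\phi X$ vanishes. The one place to be careful is that the hypothesis that $\xi$ is orthogonal to $\ker F_{\ast}$ is used twice: to make $\eta$ vanish on $\ker F_{\ast}$, and to collapse $g_{M}(\xi,\nabla_{X}Y)$ to $g_{M}(\xi,\mathcal{T}_{X}Y)$ via the decomposition $(\ref{1})$.
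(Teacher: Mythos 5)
Your proof is correct and follows essentially the same route as the paper: both establish $g_{M}(\phi U,V)=g_{M}(\mathcal{T}_{U}V,\xi)$ from $\nabla_{U}\xi=-\phi U$ and the O'Neill decomposition, then play the symmetry $\mathcal{T}_{U}V=\mathcal{T}_{V}U$ against the skew-symmetry of $\phi$ to force $g_{M}(\phi U,V)=0$. The only cosmetic difference is that you derive the key pairing via metric compatibility of $\nabla$, whereas the paper invokes the skew-symmetry identity $(\ref{4b})$ of $\mathcal{T}$ directly; these are equivalent.
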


\begin{proof}
By (\ref{nablaXxi}), (\ref{2}), (\ref{4b}) and (\ref{TUW}) we have 
\begin{eqnarray*}
g(\phi U,V) &=&-g(\nabla _{U}\xi ,V)=-g(T_{U}\xi ,V)=g(T_{U}V,\xi ) \\
&=&g(T_{V}U,\xi )=g(U,\phi V)
\end{eqnarray*}%
for any $U,V\in \Gamma (\ker F_{\ast }).$ Using skew symmetry property of $%
\phi $ in the last relation we complete the proof of the Theorem.
\end{proof}

\begin{remark}
We note Lotta \cite{LOTTA} proved that if $M_{1}$ is a submanifold of
contact metric manifold of $\tilde{M}_{1}$ and $\xi $ is orthogonal to $%
M_{1} $, then $M_{1}$ is anti-invariant submanifold. So, our result can be
seen as a submersion version of Lotta's result
\end{remark}

Now, let $F$ be a slant Riemannian submersion from a Sasakian manifold $%
M(\phi ,\xi ,\eta ,g_{M})$ onto a Riemannian manifold $(N,g_{N}).$ Then for
any $U,V\in \Gamma (\ker F_{\ast }),$ we put 
\begin{equation}
\phi U=\psi U+\omega U,  \label{TAN}
\end{equation}%
where $\psi U$ and $\omega U$ are vertical and horizontal components of $%
\phi U$, respectively. Similarly, for any $X\in \Gamma (\ker F_{\ast
})^{\perp }$, we have 
\begin{equation}
\phi X=\mathcal{B}X+\mathcal{C}X,  \label{NOR}
\end{equation}%
where $\mathcal{B}X$ (resp.$\mathcal{C}X),$ is vertical part (resp.
horizontal part) of $\phi X$.

From (\ref{metric}), (\ref{TAN}) and (\ref{NOR}) we obtain

\begin{equation}
g_{M}(\psi U,V)=-g_{M}(U,\psi V)  \label{ANTT}
\end{equation}

and%
\begin{equation}
g_{M}(\omega U,Y)=-g_{M}(U,\mathcal{B}Y).  \label{ANTN}
\end{equation}%
for any $U,V$ $\in \Gamma (\ker F_{\ast })$ and $Y\in \Gamma ((\ker F_{\ast
})^{\bot })$.

Using (\ref{1}), (\ref{TAN}) and (\ref{nablaXxi}) we obtain%
\begin{equation}
\mathcal{T}_{U}\xi =-\omega U,\text{ \ \ }\hat{\nabla}_{U}\xi =-\psi U
\label{CONNECTION}
\end{equation}%
for any $U\in \Gamma (\ker F_{\ast }).$

Now we will give the following proposition for a Riemannian submersion with
two dimensional fibers which is similar to Proposition 3.2. of \cite{AL}.

\begin{proposition}
Let $F$ be a Riemannian submersion from almost contact manifold onto a
Riemannian manifold. If dim($\ker F_{\ast })=2$ and $\xi $\ is vertical then
fibers are anti-invariant.
\end{proposition}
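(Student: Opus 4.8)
The plan is to prove that the tangential component $\psi$ of $\phi$, defined on $\ker F_{\ast }$ by the splitting $\phi U=\psi U+\omega U$ as in \eqref{TAN}, vanishes identically; this is exactly the assertion that each fiber is anti-invariant, since then $\phi U=\omega U\in\Gamma((\ker F_{\ast })^{\bot})$ for every vertical $U$. I would first use the hypotheses to fix a convenient frame: because $\xi$ is vertical and $\dim(\ker F_{\ast })=2$, at each point of a fiber the set $\{\xi ,U\}$ is an orthonormal basis of $\ker F_{\ast }$, where $U$ is any unit vertical vector orthogonal to $\xi$.

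Next I would assemble two elementary facts. On one hand, the almost contact identity $\phi \xi =0$ together with \eqref{TAN} forces $\psi \xi =0$ (and $\omega \xi =0$), so it suffices to show $\psi U=0$. On the other hand, compatibility of the metric with $\phi$ — the same computation that yields \eqref{ANTT} — shows that $\psi$ is a skew-symmetric endomorphism of $\ker F_{\ast }$, i.e.\ $g_{M}(\psi U,V)=-g_{M}(U,\psi V)$ for all vertical $U,V$.

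The conclusion is then pure linear algebra in the two-dimensional fiber. Writing $\psi U=a\xi +bU$ (legitimate since $\psi U$ is vertical), skew-symmetry gives $a=g_{M}(\psi U,\xi )=-g_{M}(U,\psi \xi )=0$ and $b=g_{M}(\psi U,U)=-g_{M}(U,\psi U)=-b$, whence $b=0$ and $\psi U=0$. Equivalently: a skew-symmetric endomorphism of a two-dimensional inner product space that annihilates one unit vector is identically zero. Hence $\psi \equiv 0$ on $\ker F_{\ast }$, and the fibers are anti-invariant.

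I do not anticipate a genuine obstacle: the argument is short once the right viewpoint — that $\psi$ is a skew form on a $2$-plane containing $\xi$ — is adopted. The only point meriting a line of care is that ``almost contact manifold'' must be read as carrying the compatible metric $g_{M}$ with respect to which $\phi$ is skew-symmetric and the decomposition $\phi U=\psi U+\omega U$ is orthogonal, which is precisely the setting in which \eqref{TAN} and \eqref{ANTT} were introduced.
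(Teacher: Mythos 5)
Your argument is correct and complete: since $\phi\xi=0$ forces $\psi\xi=0$, and $\psi$ is skew-symmetric on the two-dimensional vertical space by \eqref{ANTT}, the expansion $\psi U=a\xi+bU$ with $a=-g_{M}(U,\psi\xi)=0$ and $b=-b=0$ gives $\psi\equiv0$, i.e.\ anti-invariant fibers. The paper does not actually write out a proof of this proposition (it only points to Proposition~3.2 of \cite{AL}), and the linear-algebra argument you give — a skew-symmetric endomorphism of a $2$-plane that annihilates the unit vector $\xi$ must vanish — is precisely the standard one that citation refers to, so you have supplied the omitted proof rather than a different one. Your closing caveat that the almost contact structure must be metric-compatible for \eqref{ANTT} to hold is well taken, since the statement as printed says only ``almost contact manifold.''
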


As the proof of the following proposition is similar to slant submanifolds
(see \cite{alfonso}) we don't give its proof.

\begin{proposition}
Let $F$ be a Riemannian submersion from a Sasakian manifold $M(\phi ,\xi
,\eta ,g_{M})$ onto a Riemannian manifold $(N,g_{N})$ such that $\xi \in
\Gamma (\ker F_{\ast })$. Then $F$ is anti-invariant submersion if and only
if $D$ is integrable, where $D=\ker F_{\ast }-\{\xi \}$.
\end{proposition}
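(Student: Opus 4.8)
The plan is to recast both sides of the equivalence as statements about the endomorphism $\psi$ of $\ker F_{\ast}$ defined in (\ref{TAN}), and to link them through a single Lie-bracket identity. Since the fibres are submanifolds of $M$, the distribution $\ker F_{\ast}$ is automatically integrable, so for $U,V\in\Gamma(D)$ the bracket $[U,V]$ is already vertical; hence ``$D$ is integrable'' is equivalent to $g_{M}([U,V],\xi)=0$ for all $U,V\in\Gamma(D)$. On the other hand, by Definition 1 anti-invariance ($\theta=\pi/2$) means $\phi U\in\Gamma((\ker F_{\ast})^{\bot})$ for every vertical $U$, and since $\phi\xi=0$ this is the same as $\psi\equiv0$, i.e. $\psi U=0$ for all $U\in\Gamma(D)$.

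The computational core is the identity
\[
g_{M}([U,V],\xi)=2\,g_{M}(\psi U,V),\qquad U,V\in\Gamma(D).
\]
To prove it I would write $[U,V]=\hat{\nabla}_{U}V-\hat{\nabla}_{V}U$, where the $\mathcal{T}$-terms cancel by (\ref{TUW}); then, using that $\hat{\nabla}$ is metric on the fibre together with $\hat{\nabla}_{U}\xi=-\psi U$ from (\ref{CONNECTION}) and $g_{M}(V,\xi)=0$, one gets $g_{M}([U,V],\xi)=g_{M}(\psi U,V)-g_{M}(U,\psi V)$; a final application of the skew-symmetry (\ref{ANTT}) of $\psi$ gives the stated identity. (Equivalently, one could work directly from $\nabla_{U}\xi=-\phi U$ in (\ref{nablaXxi}) and the skew-symmetry of $\phi$ coming from (\ref{metric}).)

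Granting the identity, both implications follow at once. If $F$ is anti-invariant then $\psi U=0$, so $g_{M}([U,V],\xi)=0$ and $D$ is integrable. Conversely, if $D$ is integrable then $g_{M}(\psi U,V)=0$ for all $U,V\in\Gamma(D)$; the point requiring care — and essentially the only place the argument could slip — is that one must first observe $\psi U\in\Gamma(D)$, which holds because $g_{M}(\psi U,\xi)=g_{M}(\phi U,\xi)=\eta(\phi U)=0$ by (\ref{phi^2}). Once $\psi U\in\Gamma(D)$, testing the identity against $V=\psi U$ forces $\psi U=0$, i.e. $F$ is anti-invariant. This completes the proposed proof.
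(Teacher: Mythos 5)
Your proof is correct. Note that the paper itself gives no argument here: it only remarks that the proof is ``similar to slant submanifolds'' and defers to \cite{alfonso}, where the analogous statement for submanifolds tangent to $\xi$ is proved from the contact condition $d\eta=\Phi$, which for $U,V\in\Gamma(D)$ reads $\eta([U,V])=-2d\eta(U,V)=-2g_{M}(U,\phi V)$. Your identity $g_{M}([U,V],\xi)=2g_{M}(\psi U,V)$, obtained from $\hat{\nabla}_{U}\xi=-\psi U$ and the symmetry of $\mathcal{T}$, is exactly this relation transplanted to the fibres, so you are following essentially the intended route; your explicit check that $\psi U\perp\xi$ (so that testing against $V=\psi U$ is legitimate) is the one detail worth having on record.
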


\begin{theorem}
Let $M(\phi ,\xi ,\eta ,g_{M})$ be a Sasakian manifold \ of dimension $2m+1$
and $(N,g_{N})$ is a Riemannian manifold of dimension $n$. Let $F:M(\phi
,\xi ,\eta ,g_{M})\rightarrow $ $(N,g_{N})$ be a slant Riemannian
submersion. Then the fibers are not totally umbilical.
\end{theorem}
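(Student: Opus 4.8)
The plan is to argue by contradiction: suppose the fibers are totally umbilical, so that $\mathcal{T}_{U}W = g_{M}(U,W)H$ for all $U,W \in \Gamma(\ker F_{\ast})$, where $H$ is the mean curvature vector field of the fibers. The key structural fact available to us is \eqref{CONNECTION}, namely $\mathcal{T}_{U}\xi = -\omega U$, which holds because $\xi$ is vertical and $M$ is Sasakian. So first I would plug $W = \xi$ into the umbilicity equation to get $-\omega U = \mathcal{T}_{U}\xi = g_{M}(U,\xi)H = \eta(U)H$ for every vertical $U$.

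Next I would exploit this identity by choosing $U$ appropriately. Taking $U = \xi$ gives $\omega \xi = -H$; but $\phi\xi = 0$, so $\omega\xi = 0$, forcing $H = 0$, i.e. the fibers would have to be minimal. Then the umbilicity relation collapses to $\mathcal{T}_{U}W = 0$ for all vertical $U,W$, meaning the fibers are totally geodesic. Returning to $-\omega U = \eta(U) H = 0$, we conclude $\omega U = 0$ for all vertical $U$, so $\phi U = \psi U$ is vertical for every $U \in \Gamma(\ker F_{\ast})$; in other words the fibers are invariant. The second half of \eqref{CONNECTION}, $\hat\nabla_{U}\xi = -\psi U$, combined with $\mathcal{T}_{U}\xi = 0$, can be packaged with \eqref{1} to read $\nabla_{U}\xi = -\psi U = -\phi U$, consistent with \eqref{nablaXxi}, so no contradiction yet from that alone — the contradiction must come from a dimension count.

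The final step, which I expect to be the main obstacle, is to rule out the invariant totally geodesic case on dimensional grounds. Since $\phi$ preserves $\ker F_{\ast}$ and is an isometry on the orthogonal complement of $\xi$ inside $\ker F_{\ast}$, that complement is even-dimensional, so $\dim(\ker F_{\ast})$ is odd; write $\dim(\ker F_{\ast}) = 2k+1$. One then checks that $(\ker F_{\ast})^{\perp}$ is $\phi$-invariant as well (using $g_{M}(\phi X, U) = -g_{M}(X,\phi U) = 0$ for $X$ horizontal, $U$ vertical, since $\phi U$ is vertical), hence even-dimensional, say $n = 2\ell$. But now $\phi$ restricted to the horizontal distribution together with the fact that $F$ is slant with $\omega = 0$ means $\mathcal{C} = \phi|_{\mathcal{H}}$ squares to $-I$ there — and I would look for the clash with the slant condition itself: Definition 1 requires a genuine slant angle $\theta$ for $X \in \Gamma(\ker F_{\ast}) - \{\xi\}$, and $\omega = 0$ forces $\theta = 0$, i.e. $F$ is invariant, which is permitted by the definition but must be shown incompatible with umbilicity via the remaining curvature identities \eqref{R(xi,,X)Y} and \eqref{S(X,xi)}. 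The cleanest route is probably to compute $\mathcal{T}_{U}U$ for a unit vertical $U \perp \xi$ directly from \eqref{2} and \eqref{Nambla fi}: since the fibers are forced totally geodesic we get $\mathcal{T} \equiv 0$, yet evaluating $g_{M}(\mathcal{T}_{U}\phi U, \xi) = -g_{M}(\mathcal{T}_{U}\xi, \phi U) = g_{M}(\omega U, \phi U)$ and separately using the Sasakian identity $(\nabla_{U}\phi)U = g_{M}(U,U)\xi - \eta(U) U = \xi$ to produce a nonzero vertical-to-$\xi$ component, contradicting $\mathcal{T} \equiv 0$. Tightening exactly which identity delivers the nonzero term — and making sure it does not secretly vanish when $\omega = 0$ — is the delicate point, but the Sasakian relation $(\nabla_{X}\phi)Y = g(X,Y)\xi - \eta(Y)X$ guarantees a nontrivial $\xi$-component and that is the contradiction that closes the proof.
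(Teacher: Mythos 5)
Your first two steps are exactly the paper's proof: from $\mathcal{T}_{\xi}\xi=-\omega\xi=0$ and umbilicity you get $H=0$, hence $\mathcal{T}\equiv 0$, hence $\omega U=-\mathcal{T}_{U}\xi=0$ for every vertical $U$. The paper stops precisely there, declaring a contradiction with ``$\mathcal{T}_{U}\xi=-\omega U\neq 0$''; that is, it tacitly treats the submersion as non-invariant, in which case $g_{M}(\omega U,\omega U)=\sin^{2}\theta\,(g_{M}(U,U)-\eta(U)^{2})>0$ for $U$ not proportional to $\xi$ (see (\ref{COS6})) and $\omega\equiv 0$ is already absurd. You correctly notice that Definition 1 admits $\theta=0$, so $\omega\equiv 0$ by itself only says the fibers are invariant, and you then try to dispose of the invariant, totally geodesic case separately. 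Under the reading the paper actually uses (proper or anti-invariant), your argument is complete and identical to the paper's the moment you reach $\omega\equiv 0$.

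The last third of your proposal is where the genuine gap sits, and as written it cannot be repaired by the computations you sketch. The quantity $g_{M}(\mathcal{T}_{U}\phi U,\xi)=-g_{M}(\mathcal{T}_{U}\xi,\phi U)=g_{M}(\omega U,\omega U)$ vanishes identically once $\omega=0$, so it detects nothing. The Sasakian identity $(\nabla_{U}\phi)U=g_{M}(U,U)\xi-\eta(U)U$ is likewise harmless in the invariant case: for vertical $U$ the right-hand side is vertical (since $\xi$ is vertical), so with $\phi(\ker F_{\ast})\subset\ker F_{\ast}$ the whole identity is absorbed by $\hat{\nabla}$ and merely says that the fiber is itself a Sasakian submanifold; no horizontal, i.e.\ no $\mathcal{T}$-valued, term is forced. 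The dimension count is equally inconclusive, since both distributions being $\phi$-invariant and even-dimensional (modulo $\xi$) is perfectly consistent. An invariant totally geodesic fiber containing $\xi$ (the one-dimensional fiber of a Hopf-type fibration is the degenerate model) is not ruled out by umbilicity considerations at all, so no choice of identity from (\ref{R(xi,,X)Y}), (\ref{S(X,xi)}) or (\ref{Nambla fi}) will deliver the nonzero term you are hoping for. The correct fix is not to strengthen the final step but to restrict the hypothesis: state and prove the theorem for non-invariant slant submersions, where your own derivation of $\omega\equiv 0$ already finishes the job.
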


\begin{proof}
Using (\ref{1}) and (\ref{nablaXxi}) we obtain%
\begin{equation}
\mathcal{T}_{U}\xi =-\omega U  \label{T1}
\end{equation}%
for any $U\in \Gamma (\ker F_{\ast })$. If the fibers are totally umbilical,
then we have $\mathcal{T}_{U}V=g_{M}(U,V)H$ for any vertical vector fields $%
U,V$ where $H$ is the mean curvature vector field of any fibre. Since $%
\mathcal{T}_{\xi }\xi $ $=0$, we have $H=0$, which shows that fibres are
minimal. Hence the fibers are totally geodesic, which is a contradiction to
the fact that $\mathcal{T}_{U}\xi =-\omega U\neq 0$.
\end{proof}

By (\ref{1}), (\ref{2}), (\ref{TAN}) and (\ref{NOR}) we have%
\begin{equation}
(\nabla _{U}\omega )V=\mathcal{C}T_{U}V-\mathcal{T}_{U}\psi V,  \label{W}
\end{equation}%
\begin{equation}
(\nabla _{U}\phi )V=\mathcal{BT}_{U}V-\mathcal{T}_{U}\omega V+R(\xi ,U)V,
\label{F}
\end{equation}%
where%
\begin{eqnarray*}
(\nabla _{U}\omega )V &=&\mathcal{H}\nabla _{U}\omega V-\omega \hat{\nabla}%
_{U}V \\
(\nabla _{U}\psi )V &=&\hat{\nabla}_{U}\psi V-\psi \hat{\nabla}_{U}V,
\end{eqnarray*}%
for $U,V\in \Gamma (\ker F_{\ast }).$Now we will characterize slant
submersion by following Theorem.

\begin{theorem}
Let $F$ be a Riemannian submersion from a Sasakian manifold $M(\phi ,\xi
,\eta ,g_{M})$ onto a Riemannian manifold $(N,g_{N})$ such that $\xi \in
\Gamma (\ker F_{\ast })$. Then, $F$ is a slant Riemannian submersion if and
only if there exist a constant $\lambda \in \lbrack 0,1]$ such that 
\begin{equation}
\psi ^{2}=-\lambda (I-\eta \otimes \xi ).  \label{SLANT}
\end{equation}%
Furthermore, in such case, if $\theta $ is the slant angle of $F$, it
satisfies that $\lambda =\cos ^{2}\theta .$
\end{theorem}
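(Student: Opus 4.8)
The plan is to mimic the classical argument characterising slant submanifolds (Chen's lemma), adapted to the submersion setting via the decomposition $\phi U = \psi U + \omega U$ of (\ref{TAN}). First I would observe that for any nonzero $U \in \Gamma(\ker F_\ast)$ which is orthogonal to $\xi$, the slant angle satisfies
\[
\cos\theta(U) = \frac{g_M(\phi U, \psi U)}{|\phi U|\,|\psi U|} = \frac{|\psi U|}{|\phi U|},
\]
since $\psi U$ is precisely the orthogonal projection of $\phi U$ onto $\ker F_\ast$. Using (\ref{metric}) and $\phi\xi = 0$, for $U \perp \xi$ we get $|\phi U|^2 = g_M(\phi U,\phi U) = g_M(U,U) - \eta(U)^2 = |U|^2$. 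Hence $\cos^2\theta(U) = |\psi U|^2 / |U|^2 = g_M(\psi U, \psi U)/g_M(U,U) = -g_M(\psi^2 U, U)/g_M(U,U)$, using the skew-symmetry (\ref{ANTT}) of $\psi$.

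For the forward direction, assume $F$ is slant with angle $\theta$. Then the displayed identity gives $g_M(\psi^2 U, U) = -\cos^2\theta\, g_M(U,U)$ for all $U \in \Gamma(\ker F_\ast)$ with $U \perp \xi$; and since $\psi\xi = 0$ (from $\phi\xi=0$), the same relation $g_M(\psi^2 U,U) = -\cos^2\theta\,(g_M(U,U)-\eta(U)^2)$ holds for all vertical $U$ once we note both sides vanish on the $\xi$-direction and the identity is quadratic. To upgrade this from a statement about the quadratic form $U \mapsto g_M(\psi^2 U, U)$ to the operator identity $\psi^2 = -\cos^2\theta(I - \eta\otimes\xi)$, I would use that $\psi^2$ is a symmetric endomorphism of $\ker F_\ast$ (it is the composition $\psi\circ\psi$ of a skew operator with itself, hence symmetric by (\ref{ANTT})): a symmetric operator is determined by its associated quadratic form via polarization, so $\psi^2 + \cos^2\theta(I-\eta\otimes\xi) = 0$. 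Setting $\lambda = \cos^2\theta \in [0,1]$ gives (\ref{SLANT}).

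For the converse, suppose $\psi^2 = -\lambda(I - \eta\otimes\xi)$ for some constant $\lambda \in [0,1]$. Then for any nonzero vertical $U$ orthogonal to $\xi$ we compute $g_M(\psi U, \psi U) = -g_M(\psi^2 U, U) = \lambda\,g_M(U,U)$, which is $\geq 0$, and $|\phi U|^2 = |U|^2$ as above. Therefore
\[
\cos^2\theta(U) = \frac{g_M(\phi U,\psi U)}{|\phi U|^2}\cdot\frac{|\phi U|^2}{|\psi U|\,|\phi U|}
\]
— more cleanly, $\cos\theta(U) = |\psi U|/|\phi U| = \sqrt{\lambda\,|U|^2}/|U| = \sqrt{\lambda}$, a constant independent of $U$ and of the base point. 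Hence $F$ is slant with $\cos^2\theta = \lambda$, which also establishes the final clause of the statement.

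The only genuinely delicate point is the passage from the scalar/quadratic-form identity to the pointwise operator identity (\ref{SLANT}); this is where one must invoke symmetry of $\psi^2$ and polarization rather than argue naively, and one must handle the $\xi$-direction separately since the defining angle condition only constrains $U \in \Gamma(\ker F_\ast) - \{\xi\}$. Everything else is a direct unwinding of definitions together with (\ref{metric}), (\ref{phi^2}), (\ref{TAN}) and the skew-symmetry relation (\ref{ANTT}); the relations (\ref{CONNECTION}) are not needed here, though they would be if one instead wanted a connection-theoretic proof.
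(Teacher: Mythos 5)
Your proof is correct, and it reaches the operator identity (\ref{SLANT}) by a different mechanism than the paper. The paper follows the Cabrerizo--Carriazo--Fern\'andez--Fern\'andez template for slant submanifolds: it first substitutes $\psi U$ for $U$ in the angle formula (using constancy of $\theta$) to get $|\psi U|^{2}=|\psi ^{2}U|\,|\phi U|$, combines this with $g_{M}(\psi ^{2}U,U)=-|\psi U|^{2}$ to deduce $g_{M}(\psi ^{2}U,\phi ^{2}U)=|\psi ^{2}U|\,|\phi ^{2}U|$, and then invokes the equality case of Cauchy--Schwarz to conclude that $\psi ^{2}U$ and $\phi ^{2}U$ are collinear for each $U$, with the proportionality factor pinned down as $\cos ^{2}\theta $ afterwards. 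You instead pass directly from $\cos ^{2}\theta =-g_{M}(\psi ^{2}U,U)/|U|^{2}$ (valid for $U\perp \xi $, where $|\phi U|=|U|$) to the operator identity by observing that $\psi ^{2}$ is symmetric --- being the square of the skew operator $\psi $ from (\ref{ANTT}) --- so the quadratic form determines it by polarization; the $\xi $-direction is handled separately via $\psi \xi =0$ and $g_{M}(\psi ^{2}U,\xi )=0$. Both arguments are sound; yours avoids the substitution $U\mapsto \psi U$ and the collinearity step entirely and is arguably cleaner, while the paper's version has the minor virtue of exhibiting the pointwise collinearity $\psi^2U=\lambda(U)\,\phi^2U$ before constancy of $\theta$ is used. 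Your treatment of the converse (reading $\cos\theta(U)=|\psi U|/|\phi U|=\sqrt{\lambda}$ off the hypothesis, including the degenerate case $\psi U=0$) is also more explicit than the paper's one-line remark. No gaps.
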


\begin{proof}
Firstly we suppose that $F$ is not an anti-invariant Riemannian submersion.
Then, for $U\in \Gamma (\ker F_{\ast }),$%
\begin{equation}
\cos \theta =\frac{g_{M}(\phi U,\psi U)}{\left\vert \phi U\right\vert
\left\vert \psi U\right\vert }=\frac{\left\vert \psi U\right\vert ^{2}}{%
\left\vert \phi U\right\vert \left\vert \psi U\right\vert }=\frac{\left\vert
\psi U\right\vert }{\left\vert \phi U\right\vert }.  \label{COS1}
\end{equation}%
Since $\phi U\perp \xi ,$ we have $g(\psi U,\xi )=0.$ Now, substituting $U$
by $\psi U$ in (\ref{COS1}) \ and using (\ref{metric}) we obtain%
\begin{equation}
\cos \theta =\frac{\left\vert \psi ^{2}U\right\vert }{\left\vert \phi \psi
U\right\vert }=\frac{\left\vert \psi ^{2}U\right\vert }{\left\vert \psi
U\right\vert }.  \label{COS2}
\end{equation}%
From (\ref{COS1}) and (\ref{COS2}) we have 
\begin{equation}
\left\vert \psi U\right\vert ^{2}=\left\vert \psi ^{2}U\right\vert
\left\vert \phi U\right\vert  \label{COS2A}
\end{equation}%
On the other hand, one can get following 
\begin{eqnarray}
g_{M}(\psi ^{2}U,U) &=&g_{M}(\phi \psi U,U)=-g_{M}(\psi U,\phi U)
\label{COS3} \\
&=&-g_{M}(\psi U,\psi U)=-\left\vert \psi U\right\vert ^{2}.  \notag
\end{eqnarray}%
Using (\ref{COS2A}) and (\ref{COS3}) we get 
\begin{eqnarray}
g_{M}(\psi ^{2}U,U) &=&-\left\vert \psi ^{2}U\right\vert \left\vert \phi
U\right\vert  \notag \\
&=&-\left\vert \psi ^{2}U\right\vert \left\vert \phi ^{2}U\right\vert
\label{COS4}
\end{eqnarray}%
Also, one can easily get 
\begin{equation}
g_{M}(\psi ^{2}U,\phi ^{2}U)=-g_{M}(\psi ^{2}U,U).  \label{COS5}
\end{equation}%
So, by help (\ref{COS4}) and (\ref{COS5}) we obtain $g_{M}(\psi ^{2}U,\phi
^{2}U)=\left\vert \psi ^{2}U\right\vert \left\vert \phi ^{2}U\right\vert $
and it follows that $\psi ^{2}U$ and $\phi ^{2}U$ are colineal, that is $%
\psi ^{2}U=\lambda \phi ^{2}U=-\lambda (I-\eta \otimes \xi ).$ Using the
last relation together with (\ref{COS1}) and (\ref{COS2}) we obtain $\cos
\theta =\sqrt{\lambda }$ is constant and so $F$ is a slant Riemannian
submersion.

If $\ F$ is anti-invariant Riemannian submersion $\phi U$ is normal, $\psi
U=0$ and it is equivalent to $\psi ^{2}U=0.$ In this case $\theta =\frac{\pi 
}{2}$ and so the equation (\ref{COS1}) is again provided.
\end{proof}

By using (\ref{metric}), (\ref{TAN}), (\ref{ANTT}) and (\ref{SLANT}) we have
following Lemma.

\begin{lemma}
Let $F$ be a slant Riemannian submersion from a Sasakian manifold $M(\phi
,\xi ,\eta ,g_{M})$ onto a Riemannian manifold $(N,g_{N})$ with slant angle $%
\theta .$Then the following relations are valid 
\begin{equation}
g_{M}(\psi U,\psi V)=\cos ^{2}\theta (g_{M}(U,V)-\eta (U)\eta (V)),
\label{COS5A}
\end{equation}%
\begin{equation}
g_{M}(\omega U,\omega V)=\sin ^{2}\theta (g_{M}(U,V)-\eta (U)\eta (V))
\label{COS6}
\end{equation}%
for any $U,V\in \Gamma (\ker F_{\ast }).$
\end{lemma}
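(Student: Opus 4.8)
The plan is to derive both identities directly from the decomposition $\phi U = \psi U + \omega U$ together with the characterization $\psi^2 = -\cos^2\theta\,(I - \eta\otimes\xi)$ established in the preceding Theorem, using the compatibility relation \eqref{metric} for the Sasakian metric and the skew-symmetry relation \eqref{ANTT} for $\psi$.

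First I would prove \eqref{COS5A}. Starting from $g_M(\psi U,\psi V)$, I apply \eqref{ANTT} to move one $\psi$ across: $g_M(\psi U,\psi V) = -g_M(U,\psi^2 V)$. Then I substitute \eqref{SLANT}, i.e. $\psi^2 V = -\cos^2\theta\,(V - \eta(V)\xi)$, to get $g_M(\psi U,\psi V) = \cos^2\theta\,(g_M(U,V) - \eta(V)g_M(U,\xi)) = \cos^2\theta\,(g_M(U,V) - \eta(U)\eta(V))$, using $\eta(U) = g_M(U,\xi)$ from \eqref{metric}. This gives the first relation.

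Next I would prove \eqref{COS6}. From \eqref{TAN} and the metric compatibility \eqref{metric} applied to $\phi U$ and $\phi V$, one has $g_M(\phi U,\phi V) = g_M(U,V) - \eta(U)\eta(V)$. On the other hand, expanding $\phi U = \psi U + \omega U$ and $\phi V = \psi V + \omega V$ and using that $\psi U, \psi V$ are vertical while $\omega U, \omega V$ are horizontal (so the cross terms vanish), I get $g_M(\phi U,\phi V) = g_M(\psi U,\psi V) + g_M(\omega U,\omega V)$. Combining these two expressions and substituting the already-established \eqref{COS5A} yields $g_M(\omega U,\omega V) = (1-\cos^2\theta)(g_M(U,V)-\eta(U)\eta(V)) = \sin^2\theta\,(g_M(U,V)-\eta(U)\eta(V))$, as claimed.

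I do not anticipate a genuine obstacle here: the proof is a short bookkeeping computation once Theorem 4 (the $\psi^2$-characterization) is in hand. The only point requiring minor care is the orthogonality of the vertical and horizontal components, which ensures the cross terms $g_M(\psi U,\omega V)$ and $g_M(\omega U,\psi V)$ drop out of the expansion of $g_M(\phi U,\phi V)$; this is immediate from the definitions of $\psi$ and $\omega$ as the vertical and horizontal projections of $\phi$. I would also remark that the $\xi$-terms are handled uniformly by \eqref{phi^2} and \eqref{metric}, so no separate case analysis for $U$ or $V$ proportional to $\xi$ is needed.
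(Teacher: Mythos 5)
Your proof is correct and follows exactly the route the paper intends: the paper gives no written proof but states that the lemma follows from (\ref{metric}), (\ref{TAN}), (\ref{ANTT}) and (\ref{SLANT}), which are precisely the four ingredients you use (skew-symmetry of $\psi$ plus the $\psi^2$ formula for \eqref{COS5A}, then the metric compatibility and the orthogonal decomposition of $\phi$ for \eqref{COS6}). No issues.
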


We denote the complementary orthogonal distribution to $\omega (\ker F_{\ast
})$ in $(\ker F_{\ast })^{\bot }$ by $\mu .$ Then we have%
\begin{equation}
(\ker F_{\ast })^{\bot }=\omega (\ker F_{\ast })\oplus \mu .  \label{A1}
\end{equation}

\begin{lemma}
Let $F$ be a proper slant Riemannian submersion from a Sasakian manifold $%
M(\phi ,\xi ,\eta ,g_{M})$ onto a Riemannian manifold $(N,g_{N})$ then $\mu $
is an invariant distribution of $(\ker F_{\ast })^{\bot },$ under the
endomorphism $\phi $.
\end{lemma}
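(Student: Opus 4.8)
The plan is to show that $\phi(\mu)\subseteq\mu$ by verifying that $\phi Z$ is orthogonal to both $\omega(\ker F_\ast)$ and $\xi$ for every $Z\in\Gamma(\mu)$, and also that $\phi Z$ has no vertical component, so that it actually lies in $(\ker F_\ast)^\bot$ and in fact in $\mu$. First I would take an arbitrary $Z\in\Gamma(\mu)$ and an arbitrary $U\in\Gamma(\ker F_\ast)$, and compute $g_M(\phi Z,\omega U)$. Using the decomposition (\ref{TAN}), write $\omega U=\phi U-\psi U$, so that $g_M(\phi Z,\omega U)=g_M(\phi Z,\phi U)-g_M(\phi Z,\psi U)$. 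For the first term apply the compatibility (\ref{metric}): $g_M(\phi Z,\phi U)=g_M(Z,U)-\eta(Z)\eta(U)=0$, since $Z$ is horizontal (hence orthogonal to the vertical $U$) and $\eta(Z)=g_M(Z,\xi)=0$ because $\xi$ is vertical while $Z\in\mu\subseteq(\ker F_\ast)^\bot$. For the second term, $\psi U\in\Gamma(\ker F_\ast)$ is vertical, so $g_M(\phi Z,\psi U)=-g_M(Z,\phi\psi U)$ by skew-symmetry of $\phi$; and $\phi\psi U=\psi^2 U+\omega\psi U$, where $\psi^2 U$ is vertical (hence orthogonal to $Z$) and $\omega\psi U\in\omega(\ker F_\ast)$, which is orthogonal to $\mu$ by the very definition (\ref{A1}). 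Hence $g_M(\phi Z,\psi U)=0$, and therefore $g_M(\phi Z,\omega U)=0$ for all $U$, i.e. $\phi Z\perp\omega(\ker F_\ast)$.

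Next I would check that $\phi Z$ is vertical-free. For any $U\in\Gamma(\ker F_\ast)$ we have $g_M(\phi Z,U)=-g_M(Z,\phi U)=-g_M(Z,\psi U)-g_M(Z,\omega U)$; the first summand vanishes since $\psi U$ is vertical and $Z$ horizontal, and the second vanishes by the paragraph just above (with the roles symmetric, or directly by $Z\perp\omega(\ker F_\ast)$). Thus $g_M(\phi Z,U)=0$ for every vertical $U$, which together with the previous computation shows $\phi Z\in\Gamma((\ker F_\ast)^\bot)$ and $\phi Z\perp\omega(\ker F_\ast)$, i.e. $\phi Z\in\Gamma(\mu)$. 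This proves $\phi(\mu)\subseteq\mu$. Finally, applying $\phi$ again and using (\ref{phi^2}), for $Z\in\Gamma(\mu)$ we get $Z=-\phi^2 Z+\eta(Z)\xi=-\phi(\phi Z)$ since $\eta(Z)=0$, so $Z\in\phi(\Gamma(\mu))$ because $\phi Z\in\Gamma(\mu)$; hence $\phi(\mu)=\mu$ and $\mu$ is $\phi$-invariant.

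The routine part is the bookkeeping of which summands are vertical versus horizontal; the one genuine point requiring care — the main (small) obstacle — is the claim that $\omega\psi U$ lies in $\omega(\ker F_\ast)$ and is therefore $\perp\mu$: this is immediate from $\psi U\in\Gamma(\ker F_\ast)$ and the definition of $\omega$ as the horizontal component of $\phi$ on $\ker F_\ast$, but it is the step where the slant hypothesis (ensuring $\psi$ maps $\ker F_\ast$ into itself, built into the decomposition (\ref{TAN})) is implicitly used. Everything else is a direct consequence of (\ref{metric}), the skew-symmetry of $\phi$, and the orthogonal splitting (\ref{A1}).
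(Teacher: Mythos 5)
Your proposal is correct and follows essentially the same route as the paper: you compute $g_M(\phi Z,\omega U)$ via the decomposition $\omega U=\phi U-\psi U$, use the compatibility of $g_M$ with $\phi$ and the splitting $\phi\psi U=\psi^2U+\omega\psi U$ to kill each term, and then check separately that $\phi Z$ has no vertical component. The only (harmless) difference is that the paper invokes the slant identity $\psi^2=-\cos^2\theta\,(I-\eta\otimes\xi)$ at the step where you simply observe that $\psi^2U$ is vertical, and your closing remark that $\phi(\mu)=\mu$ (not just $\subseteq$) is a small bonus the paper leaves implicit.
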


\begin{proof}
For $X\in \Gamma (\mu ),$ from (\ref{metric}) and (\ref{TAN}), we obtain%
\begin{eqnarray*}
g_{M}(\phi X,\omega V) &=&g_{M}(\phi X,\phi V)-g_{M}(\phi X,\psi V) \\
&=&g_{M}(X,V)-\eta (X)\eta (V)-g_{M}(\phi X,\psi V) \\
&=&-g_{M}(X,\phi \psi V).
\end{eqnarray*}%
Using (\ref{SLANT}) and (\ref{A1}) we have%
\begin{eqnarray*}
g_{M}(\phi X,\omega V) &=&-\cos ^{2}\theta g_{M}(X,V-\eta (V)\xi ) \\
&=&g_{M}(X,\omega \psi V) \\
&=&0.
\end{eqnarray*}%
In a similar way, we have $g_{M}(\phi X,U)=-g_{M}(X,\phi U)=0$ due to $\phi
U\in \Gamma ((\ker F_{\ast })\oplus \omega (\ker F_{\ast }))$ for $X\in
\Gamma (\mu )$ and $U\in \Gamma (\ker F_{\ast }).$Thus the proof of the
lemma is completed.
\end{proof}

By help (\ref{COS6}), we can give following

\begin{corollary}
Let $F$ be a proper slant Riemannian submersion from a Sasakian manifold $%
M^{2m+1}(\phi ,\xi ,\eta ,g_{M})$ onto a Riemannian manifold $(N^{n},g_{N}).$%
Let%
\begin{equation*}
\left\{ e_{1},e_{2},...e_{2m-n},\xi \right\}
\end{equation*}%
be a local orthonormal basis of $(\ker F_{\ast }),$ then $\left\{ \csc
\theta we_{1},\csc \theta we_{2},...,\csc \theta we_{2m-n}\right\} $ is a
local orthonormal basis of $\omega (\ker F_{\ast }).$
\end{corollary}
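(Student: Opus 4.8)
The plan is to verify that the proposed set $\{\csc\theta\,\omega e_1,\dots,\csc\theta\,\omega e_{2m-n}\}$ is both orthonormal and has the right cardinality, and then argue it spans $\omega(\ker F_\ast)$. First I would compute inner products: for $i,j\in\{1,\dots,2m-n\}$, apply Lemma~6 (equation~(\ref{COS6})) to the vectors $e_i,e_j$, which are orthonormal and satisfy $\eta(e_i)=\eta(e_j)=0$ since they are orthogonal to $\xi$ inside $\ker F_\ast$. This gives $g_M(\omega e_i,\omega e_j)=\sin^2\theta\,(\,g_M(e_i,e_j)-\eta(e_i)\eta(e_j)\,)=\sin^2\theta\,\delta_{ij}$. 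Multiplying by $\csc^2\theta=1/\sin^2\theta$ (which is legitimate because $F$ is \emph{proper} slant, so $\theta\neq 0$ and $\sin\theta\neq 0$) yields $g_M(\csc\theta\,\omega e_i,\csc\theta\,\omega e_j)=\delta_{ij}$. Hence these $2m-n$ vectors form an orthonormal system.

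Next I would account for dimensions. Since $F$ is proper slant, $\theta\neq\pi/2$, so $\psi\neq 0$; more to the point, I claim $\omega$ restricted to $D=\ker F_\ast - \{\xi\}$ is injective: if $\omega U=0$ for some $U\perp\xi$ in $\ker F_\ast$, then (\ref{COS6}) with $U=V$ forces $\sin^2\theta\,|U|^2=0$, hence $U=0$ since $\sin\theta\neq0$. Therefore $\omega$ maps the $(2m-n)$-dimensional distribution $D$ injectively into $(\ker F_\ast)^\perp$, so $\dim\omega(\ker F_\ast)=\dim\omega(D)=2m-n$ (note $\omega\xi=0$ because $\phi\xi=0$, so $\omega(\ker F_\ast)=\omega(D)$). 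An orthonormal set of $2m-n$ vectors lying inside a $(2m-n)$-dimensional space is automatically a basis of it; combined with the decomposition (\ref{A1}) this identifies it as a basis of $\omega(\ker F_\ast)$.

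The only genuinely delicate point is confirming that the $e_i$ used here have $\eta(e_i)=0$, which is exactly the hypothesis that $\{e_1,\dots,e_{2m-n},\xi\}$ is an orthonormal basis of $\ker F_\ast$ with $\xi$ singled out as the last vector; orthonormality then gives $g_M(e_i,\xi)=0$, i.e.\ $\eta(e_i)=0$. Everything else is a direct substitution into Lemma~6. I do not anticipate a real obstacle here — the corollary is essentially a normalization bookkeeping consequence of (\ref{COS6}) — so the write-up will be just a couple of lines invoking that identity and the properness of $\theta$.
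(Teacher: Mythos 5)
Your proposal is correct and follows exactly the route the paper intends: the paper offers no written proof beyond ``by help of (\ref{COS6})'', and your computation $g_M(\omega e_i,\omega e_j)=\sin^2\theta\,\delta_{ij}$ (using $\eta(e_i)=0$) together with the normalization $\csc\theta$ is precisely that. The additional spanning/dimension remarks ($\omega\xi=0$ and injectivity of $\omega$ on the complement of $\xi$, both valid since $F$ is proper) only make explicit what the paper leaves implicit.
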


By using (\ref{A1}) and Corollary 1 one can easily prove the following
Proposition.

\begin{proposition}
Let $F$ be a proper slant Riemannian submersion from a Sasakian manifold $%
M^{2m+1}(\phi ,\xi ,\eta ,g_{M})$ onto a Riemannian manifold $(N^{n},g_{N}).$%
Then dim$(\mu )=2(n-m).$ If $\mu =\left\{ 0\right\} ,$then $n=m.$
\end{proposition}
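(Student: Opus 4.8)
The plan is to compute $\dim(\mu)$ by a straightforward dimension count using the orthogonal decomposition $(\ker F_\ast)^\perp = \omega(\ker F_\ast)\oplus\mu$ from \eqref{A1}. First I would record the dimensions of the three relevant spaces: since $M$ has dimension $2m+1$ and $N$ has dimension $n$, we have $\dim(\ker F_\ast) = 2m+1-n$ and $\dim\big((\ker F_\ast)^\perp\big) = n$. Next, I would determine $\dim\big(\omega(\ker F_\ast)\big)$. Because $F$ is a \emph{proper} slant submersion with slant angle $\theta\in(0,\pi/2)$, formula \eqref{COS6} gives $g_M(\omega U,\omega U)=\sin^2\theta\,(|U|^2-\eta(U)^2)$, so $\omega$ vanishes exactly on the span of $\xi$ and is injective on the complementary distribution $D=\ker F_\ast-\{\xi\}$; equivalently, Corollary~1 exhibits $\{\csc\theta\,\omega e_1,\dots,\csc\theta\,\omega e_{2m-n}\}$ as a local orthonormal basis of $\omega(\ker F_\ast)$. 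Hence $\dim\big(\omega(\ker F_\ast)\big) = 2m-n$.

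Putting these together via \eqref{A1}, $\dim(\mu) = \dim\big((\ker F_\ast)^\perp\big) - \dim\big(\omega(\ker F_\ast)\big) = n-(2m-n) = 2(n-m)$, which is the claimed formula. The final assertion is then immediate: if $\mu=\{0\}$, then $\dim(\mu)=0$, so $2(n-m)=0$, i.e. $n=m$.

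There is essentially no serious obstacle here — the argument is a bookkeeping exercise once Corollary~1 is in hand. The one point that deserves a line of care is ensuring that the local orthonormal basis $\{\csc\theta\,\omega e_i\}$ genuinely has $2m-n$ elements, i.e. that the frame $\{e_1,\dots,e_{2m-n},\xi\}$ of $\ker F_\ast$ has the stated length; this follows from $\dim(\ker F_\ast)=2m+1-n$ together with the fact that $\xi\in\Gamma(\ker F_\ast)$ is a unit vector orthogonal to $D$. I would also note in passing that the formula forces $n\geq m$, so that $\mu=\{0\}$ is the extreme case where the horizontal distribution is entirely $\omega(\ker F_\ast)$.
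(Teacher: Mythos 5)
Your proof is correct and is exactly the argument the paper intends: it states the proposition with the remark that it follows from the decomposition \eqref{A1} together with Corollary~1, which is precisely the dimension count you carry out. Nothing further is needed.
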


By (\ref{ANTT}) and (\ref{COS5A}) we have

\begin{lemma}
Let $F$ be a proper slant Riemannian submersion from a Sasakian manifold $%
M^{2m+1}(\phi ,\xi ,\eta ,g_{M})$ onto a Riemannian manifold $(N^{n},g_{N}).$
If $e_{1},e_{2},...e_{k},\xi $ are orthogonal unit vector fields in $(\ker
F_{\ast }),$then%
\begin{equation*}
\left\{ e_{1},\sec \theta \psi e_{1},e_{2},\sec \theta \psi
e_{2},...e_{k},\sec \theta \psi e_{k},\xi \right\}
\end{equation*}%
is a local orthonormal basis of $(\ker F_{\ast }).$ Moreover dim$(\ker
F_{\ast })=2m-n+1=2k+1$ and $\dim N=n=2(m-k).$
\end{lemma}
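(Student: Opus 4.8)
The plan is to build the orthonormal basis of $(\ker F_{\ast})$ explicitly and then read off the dimension counts. First I would take orthogonal unit vector fields $e_{1},\dots,e_{k},\xi$ in $(\ker F_{\ast})$ that are chosen so that each $e_{i}$ is orthogonal to $\psi e_{j}$ for $j<i$ (this is the usual Gram--Schmidt-style selection: pick $e_{1}$, then pick $e_{2}$ orthogonal to $\mathrm{span}\{e_{1},\psi e_{1},\xi\}$, and so on). The key point is that $\sec\theta\,\psi e_{i}$ has unit length: by $(\ref{COS5A})$ we have $g_{M}(\psi e_{i},\psi e_{i})=\cos^{2}\theta(g_{M}(e_{i},e_{i})-\eta(e_{i})^{2})=\cos^{2}\theta$, since $e_{i}$ is a unit vector orthogonal to $\xi$; hence $|\sec\theta\,\psi e_{i}|=1$, and this is where properness ($\theta\neq\pi/2$, so $\cos\theta\neq 0$) is used.

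Next I would verify the claimed orthogonality relations among the $2k+1$ vectors $e_{1},\sec\theta\,\psi e_{1},\dots,e_{k},\sec\theta\,\psi e_{k},\xi$. We have $g_{M}(e_{i},\xi)=0$ by hypothesis, and $g_{M}(\psi e_{i},\xi)=-g_{M}(e_{i},\psi\xi)=0$ by $(\ref{ANTT})$ together with $\psi\xi=0$ (which follows from $\phi\xi=0$ and $\xi$ being vertical). For $i\neq j$, $g_{M}(e_{i},\psi e_{j})=0$ holds by the choice of the frame; and $g_{M}(\psi e_{i},\psi e_{j})=\cos^{2}\theta(g_{M}(e_{i},e_{j})-\eta(e_{i})\eta(e_{j}))=0$ again by $(\ref{COS5A})$. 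Finally $g_{M}(e_{i},\psi e_{i})=-g_{M}(\psi e_{i},e_{i})$ by $(\ref{ANTT})$, hence $g_{M}(e_{i},\psi e_{i})=0$. So the list is an orthonormal set; since the $e_{i}$ together with $\xi$ and the new vectors $\psi e_{i}$ are forced to span $(\ker F_{\ast})$ by the selection procedure terminating (there is no room left for a further $e_{k+1}$), it is an orthonormal basis.

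From the basis I would conclude $\dim(\ker F_{\ast})=2k+1$. On the other hand, the submersion gives $\dim(\ker F_{\ast})=(2m+1)-n$, so $2k+1=2m+1-n$, i.e. $n=2(m-k)$ and $\dim(\ker F_{\ast})=2m-n+1=2k+1$, which is exactly the assertion. The main obstacle, such as it is, is purely bookkeeping: one must be careful that the frame $e_{1},\dots,e_{k},\xi$ can genuinely be chosen with the mutual orthogonality to the $\psi e_{j}$'s, i.e. that the process of enlarging $\{e_{1},\psi e_{1},\xi\}$ step by step exhausts $(\ker F_{\ast})$ and that at each stage $\psi$ maps the orthogonal complement of $\{\xi\}$ into itself — this is where $\psi^{2}=-\cos^{2}\theta(I-\eta\otimes\xi)$ from Theorem $3$ guarantees the span of $\{e_{i},\psi e_{i}\}$ is $\psi$-invariant modulo $\xi$, so the construction is consistent. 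Everything else is a direct substitution into $(\ref{ANTT})$ and $(\ref{COS5A})$.
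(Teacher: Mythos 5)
Your argument is correct and matches the paper's: the paper offers no written proof, only the remark that the lemma follows from (\ref{ANTT}) and (\ref{COS5A}), which are precisely the two identities you use (skew-symmetry of $\psi$ for $g_{M}(e_{i},\psi e_{i})=0$ and $g_{M}(\psi e_{i},\xi)=0$, and (\ref{COS5A}) for $|\psi e_{i}|=\cos\theta$ and the mutual orthogonality of the $\psi e_{i}$), together with the dimension count $\dim(\ker F_{\ast})=(2m+1)-n$. You are also right that the $e_{i}$ must be chosen adaptively so that $e_{i}\perp\psi e_{j}$ --- for an arbitrary orthogonal set the conclusion can fail (take $e_{2}=\sec\theta\,\psi e_{1}$) --- and your Gram--Schmidt construction, justified by the $\psi$-invariance of the orthogonal complement of $\mathrm{span}\{e_{1},\psi e_{1},\dots,\xi\}$, supplies exactly the detail the paper leaves implicit.
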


\begin{lemma}
Let $F$ be a slant Riemannian submersion from a Sasakian manifold $M(\phi
,\xi ,\eta ,g_{M})$ onto a Riemannian manifold $(N,g_{N})$ \ If $\omega $ is
parallel then we have%
\begin{equation}
\mathcal{T}_{\psi U}\psi U=-\cos ^{2}\theta (\mathcal{T}_{U}U+\eta (U)\omega
U)  \label{SEC1}
\end{equation}

\begin{proof}
If $\omega $ is parallel, from (\ref{W}), we obtain $\mathcal{C}T_{U}V=%
\mathcal{T}_{U}\psi V$ for $U,V\in \Gamma (\ker F_{\ast }).$We interchange $%
U $ and $V$ and use (\ref{TUW}) we get 
\begin{equation*}
\mathcal{T}_{U}\psi V=\mathcal{T}_{V}\psi U.
\end{equation*}%
Substituting $V$ by $\psi U$ in the above equation and then using Theorem 3
we get the required formula.
\end{proof}
\end{lemma}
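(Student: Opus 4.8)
The plan is to start from the consequence of $\omega$ being parallel that the authors have already extracted. From equation (\ref{W}), $(\nabla_U\omega)V = \mathcal{C}\mathcal{T}_U V - \mathcal{T}_U\psi V$, so $\omega$ parallel gives $\mathcal{C}\mathcal{T}_U V = \mathcal{T}_U\psi V$ for all $U,V\in\Gamma(\ker F_\ast)$. Since $\mathcal{T}_U V = \mathcal{T}_V U$ by (\ref{TUW}), the left-hand side is symmetric in $U,V$, hence so is the right-hand side: $\mathcal{T}_U\psi V = \mathcal{T}_V\psi U$. This is exactly the intermediate identity quoted in the sketch.

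Next I would substitute $V = \psi U$ into $\mathcal{T}_U\psi V = \mathcal{T}_V\psi U$, obtaining $\mathcal{T}_U\psi^2 U = \mathcal{T}_{\psi U}\psi U$. Now invoke Theorem 3 (the slant characterization, equation (\ref{SLANT})): $\psi^2 U = -\cos^2\theta\,(U - \eta(U)\xi)$. Plugging this in and using linearity of $\mathcal{T}_U$ in its argument,
\[
\mathcal{T}_{\psi U}\psi U = \mathcal{T}_U\psi^2 U = -\cos^2\theta\,\bigl(\mathcal{T}_U U - \eta(U)\,\mathcal{T}_U\xi\bigr).
\]
Finally, apply (\ref{CONNECTION}) (equivalently (\ref{T1})), which says $\mathcal{T}_U\xi = -\omega U$. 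Substituting, $-\eta(U)\mathcal{T}_U\xi = \eta(U)\omega U$, and therefore
\[
\mathcal{T}_{\psi U}\psi U = -\cos^2\theta\,\bigl(\mathcal{T}_U U + \eta(U)\,\omega U\bigr),
\]
which is (\ref{SEC1}).

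I expect no genuine obstacle here: the argument is a short chain of substitutions, and every ingredient — the symmetry of $\mathcal{T}$, the parallelism of $\omega$ fed into (\ref{W}), the slant identity of Theorem 3, and the formula $\mathcal{T}_U\xi = -\omega U$ — has already been established in the excerpt. The only point requiring a little care is the polarization-type step: $\mathcal{T}_U\psi V = \mathcal{T}_V\psi U$ must hold for all vertical $U,V$ before one is entitled to set $V=\psi U$, and $\psi U$ is indeed vertical by the definition (\ref{TAN}) of $\psi$, so the substitution is legitimate. One could alternatively avoid even naming the symmetric intermediate step and argue directly: $\mathcal{T}_{\psi U}\psi U = \mathcal{C}\mathcal{T}_{\psi U}U = \mathcal{C}\mathcal{T}_U\psi U$ and iterate, but routing through $\mathcal{T}_U\psi V = \mathcal{T}_V\psi U$ is cleaner.
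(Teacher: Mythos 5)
Your proposal is correct and follows essentially the same route as the paper: extract $\mathcal{C}\mathcal{T}_UV=\mathcal{T}_U\psi V$ from (\ref{W}), symmetrize via (\ref{TUW}) to get $\mathcal{T}_U\psi V=\mathcal{T}_V\psi U$, set $V=\psi U$, and apply Theorem 3 together with $\mathcal{T}_U\xi=-\omega U$. You merely make explicit the final substitution of $\mathcal{T}_U\xi=-\omega U$, which the paper leaves implicit in the phrase ``using Theorem 3 we get the required formula.''
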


We give a sufficient condition for a slant Riemannian submersion to be
harmonic as an analogue of a slant Riemannian submersion from a Sasakian
manifold onto a Riemannian manifold in \cite{SAHIN1}.

\begin{theorem}
Let $F$ be a slant Riemannian submersion from a Sasakian manifold $M(\phi
,\xi ,\eta ,g_{M})$ onto a Riemannian manifold $(N,g_{N})$ \ If $\omega $ is
parallel then $F$ is a harmonic map.
\end{theorem}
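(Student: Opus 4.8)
The plan is to show that the tension field $\tau(F)$ vanishes by computing $\sum_i (\nabla F_\ast)(e_i,e_i)$ over a suitable adapted orthonormal frame, and arguing that each contribution is zero. Since $(\nabla F_\ast)(X,Y)=0$ for horizontal $X,Y$ by \eqref{5a}, only the vertical directions matter, so it suffices to show $\sum_j (\nabla F_\ast)(U_j,U_j)=0$ for an orthonormal basis $\{U_j\}$ of $\ker F_\ast$. Equivalently, since $(\nabla F_\ast)(U,V) = -F_\ast(\nabla_U V) = -F_\ast(\mathcal{H}\nabla_U V)$ for vertical $U,V$ (using \eqref{5} and that $F_\ast$ kills vertical vectors), the claim reduces to showing that the mean curvature vector $H = \frac{1}{\dim(\ker F_\ast)}\sum_j \mathcal{T}_{U_j}U_j$ is zero, i.e. that the fibers are minimal.

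The key is to exploit the parallelism of $\omega$ together with Lemma 7 and Lemma 8. By Lemma 7, choose an adapted orthonormal basis of $\ker F_\ast$ of the form $\{e_1,\sec\theta\,\psi e_1,\dots,e_k,\sec\theta\,\psi e_k,\xi\}$. Then
\begin{equation*}
(2k+1)\,H = \sum_{a=1}^{k}\bigl(\mathcal{T}_{e_a}e_a + \sec^2\theta\,\mathcal{T}_{\psi e_a}\psi e_a\bigr) + \mathcal{T}_\xi\xi.
\end{equation*}
Now $\mathcal{T}_\xi\xi = 0$ by \eqref{nablaXxi} (since $\nabla_\xi\xi = -\phi\xi = 0$, so $\mathcal{T}_\xi\xi = \mathcal{H}\nabla_\xi\xi = 0$). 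For each $a$, apply \eqref{SEC1} from Lemma 9 with $U = e_a$: since $e_a$ is a unit vector orthogonal to $\xi$, we have $\eta(e_a)=0$, so $\mathcal{T}_{\psi e_a}\psi e_a = -\cos^2\theta\,\mathcal{T}_{e_a}e_a$, hence $\sec^2\theta\,\mathcal{T}_{\psi e_a}\psi e_a = -\mathcal{T}_{e_a}e_a$. Therefore each bracketed term in the sum vanishes identically, giving $(2k+1)H = 0$, so $H = 0$.

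Having established $H=0$, I would finish as follows. For vertical $U$, we have $(\nabla F_\ast)(U,U) = -F_\ast(\nabla_U U) = -F_\ast(\mathcal{H}\nabla_U U) = -F_\ast(\mathcal{T}_U U)$; summing over the adapted basis $\{U_j\}$ of $\ker F_\ast$ and using linearity of $F_\ast$ gives $\sum_j (\nabla F_\ast)(U_j,U_j) = -F_\ast\bigl(\sum_j \mathcal{T}_{U_j}U_j\bigr) = -(2k+1)\,F_\ast(H) = 0$. Combined with \eqref{5a} for the horizontal part, $\tau(F) = \mathrm{trace}(\nabla F_\ast) = 0$, so $F$ is harmonic. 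The main obstacle is really just bookkeeping: one must be careful that the frame furnished by Lemma 7 is genuinely orthonormal (which it is, by \eqref{COS5A}: $|\sec\theta\,\psi e_a|^2 = \sec^2\theta\cos^2\theta |e_a|^2 = 1$ and $g_M(e_a,\psi e_a) = 0$ by \eqref{ANTT}), and that the special direction $\xi$ contributes nothing — both points are handled above. No delicate estimate is needed; the whole argument rests on the pointwise identity \eqref{SEC1}.
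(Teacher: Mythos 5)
Your proposal is correct and follows essentially the same route as the paper: both use the adapted frame $\{e_a,\sec\theta\,\psi e_a,\xi\}$, the vanishing $\mathcal{T}_{\xi}\xi=0$, and the identity (\ref{SEC1}) with $\eta(e_a)=0$ to cancel the paired terms and conclude the fibers are minimal, hence $\tau(F)=0$. The only cosmetic difference is that you phrase the cancellation in terms of the mean curvature vector $H$ before applying $F_{\ast}$, whereas the paper applies $F_{\ast}$ term by term inside the trace.
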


\begin{proof}
From \cite{EJ} we know that $F$ is harmonic if and only if $F$ has minimal
fibres. Thus $F$ is harmonic if and only if $\dsum\limits_{i=1}^{n_{1}}%
\mathcal{T}_{e_{i}}e_{i}=0.$ Thus using the adapted frame for slant
Riemannian submersion and by the help of (\ref{6}) and Lemma 5 we can write 
\begin{equation*}
\tau =-\dsum\limits_{i=1}^{m-\frac{n}{2}}F_{\ast }(\mathcal{T}_{e_{i}}e_{i}+%
\mathcal{T}_{\sec \theta \psi e_{i}}\sec \theta \psi e_{i})-F_{\ast }(%
\mathcal{T}_{\xi }\xi ).
\end{equation*}%
Using $\mathcal{T}_{\xi }\xi =0$ we have 
\begin{equation*}
\tau =-\dsum\limits_{i=1}^{m-\frac{n}{2}}F_{\ast }(\mathcal{T}%
_{e_{i}}e_{i}+\sec ^{2}\theta \mathcal{T}_{\psi e_{i}}\psi e_{i})
\end{equation*}%
By virtue of (\ref{SEC1}) in the above equation, we obtain%
\begin{eqnarray*}
\tau &=&-\dsum\limits_{i=1}^{m-\frac{n}{2}}F_{\ast }(\mathcal{T}%
_{e_{i}}e_{i}+\sec ^{2}\theta (-\cos ^{2}\theta (\mathcal{T}%
_{e_{i}}e_{i}+\eta (e_{i})\omega e_{i}))) \\
&=&-\dsum\limits_{i=1}^{m-\frac{n}{2}}F_{\ast }(\mathcal{T}_{e_{i}}e_{i}-%
\mathcal{T}_{e_{i}}e_{i})=0
\end{eqnarray*}%
So we prove that $F$ is harmonic.
\end{proof}

Now setting $Q=\psi ^{2}$,we define $\nabla Q$ by%
\begin{equation*}
(\nabla _{U}Q)V=\mathcal{V}\nabla _{U}QV-Q\hat{\nabla}_{U}V
\end{equation*}%
for any $U,V\in \Gamma (\ker F_{\ast }).$We give a characterization for a
slant Riemannian submersion from a Sasakian manifold $M(\phi ,\xi ,\eta
,g_{M})$ onto a Riemannian manifold $(N,g_{N})$ by using the value of $%
\nabla Q$.

\begin{proposition}
Let $F$ be a slant Riemannian submersion from a Sasakian manifold $M(\phi
,\xi ,\eta ,g_{M})$ onto a Riemannian manifold $(N,g_{N}).$ Then, $\nabla
Q=0 $ if and only if $F$ is an anti-invariant submersion.
\end{proposition}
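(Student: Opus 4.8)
The plan is to prove the two implications separately. The forward one is immediate; the converse rests on an explicit formula for $\nabla Q$.

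First the easy direction. If $F$ is anti-invariant then $\theta=\pi/2$, so $\psi\equiv 0$ on $\ker F_{\ast}$, hence $Q=\psi^{2}=0$ and therefore $(\nabla_{U}Q)V=\mathcal{V}\nabla_{U}(QV)-Q\hat{\nabla}_{U}V=0$ for all $U,V\in\Gamma(\ker F_{\ast})$. This uses nothing beyond the definition of $\nabla Q$ given just before the statement.

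For the converse the strategy is to compute $(\nabla_{U}Q)V$ in closed form using Theorem 3. Substituting $QV=\psi^{2}V=-\cos^{2}\theta\,(V-\eta(V)\xi)$ (and using that $\cos^{2}\theta$ is constant) into $(\nabla_{U}Q)V=\mathcal{V}\nabla_{U}(QV)-Q\hat{\nabla}_{U}V$, the $\hat{\nabla}_{U}V$-terms cancel and one is left with terms involving $\xi$ and $\psi U$. To reduce these I would use $\mathcal{V}\nabla_{U}\xi=-\psi U$ (from $\nabla_{U}\xi=-\phi U$ in (\ref{nablaXxi}) and the splitting $\phi U=\psi U+\omega U$ in (\ref{TAN})) and evaluate $U(\eta(V))=g_{M}(\nabla_{U}V,\xi)+g_{M}(V,\nabla_{U}\xi)$: since $\mathcal{T}_{U}V$ is horizontal, $g_{M}(\nabla_{U}V,\xi)=\eta(\hat{\nabla}_{U}V)$, and since $\omega U$ is horizontal and $\phi$ is skew-symmetric, $g_{M}(V,\nabla_{U}\xi)=-g_{M}(\psi U,V)$. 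Collecting everything gives
\[
(\nabla_{U}Q)V=-\cos^{2}\theta\,\bigl(g_{M}(\psi U,V)\,\xi+\eta(V)\,\psi U\bigr),\qquad U,V\in\Gamma(\ker F_{\ast}).
\]

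To conclude, assume $\nabla Q=0$ and set $V=\psi U$. Since $g_{M}(\psi U,\xi)=-g_{M}(U,\phi\xi)=0$ we have $\eta(\psi U)=0$, so the identity above collapses to $0=(\nabla_{U}Q)(\psi U)=-\cos^{2}\theta\,\lvert\psi U\rvert^{2}\,\xi$, that is, $\cos^{2}\theta\,\lvert\psi U\rvert^{2}=0$ for every $U\in\Gamma(\ker F_{\ast})$. If $F$ were not anti-invariant, then $\cos\theta\neq0$, forcing $\psi\equiv0$ on $\ker F_{\ast}$; but then Theorem 3 gives $0=\psi^{2}=-\cos^{2}\theta\,(I-\eta\otimes\xi)$, and since $I-\eta\otimes\xi$ restricts to the identity on $D=\ker F_{\ast}-\{\xi\}$ (the degenerate case where $\ker F_{\ast}$ is one-dimensional being trivially anti-invariant), this forces $\cos\theta=0$, a contradiction. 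Hence $F$ is anti-invariant. The only genuine work is deriving the displayed formula for $(\nabla_{U}Q)V$, and the one point that needs care there is the term $U(\eta(V))$, where the horizontality of both $\mathcal{T}_{U}V$ and $\omega U$ must be used so that precisely the stated pieces survive; once that is in hand the specialization $V=\psi U$ closes the argument in a line, so I do not expect any real obstacle.
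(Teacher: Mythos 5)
Your proof is correct and follows essentially the same route as the paper: both derive the closed-form identity $(\nabla_{U}Q)V=-\cos^{2}\theta\,(g_{M}(\psi U,V)\xi+\eta(V)\psi U)$ from Theorem 3 together with $\nabla_{U}\xi=-\phi U$, and then read off the equivalence. Your specialization $V=\psi U$ (and the explicit check of the easy direction) is just a slightly tidier way of drawing the same conclusion the paper draws directly from that formula.
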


\begin{proof}
By using (\ref{SLANT}), 
\begin{equation}
Q\hat{\nabla}_{U}V=-\cos ^{2}\theta (\hat{\nabla}_{U}V-\eta (\hat{\nabla}%
_{U}V)\xi )  \label{Q1}
\end{equation}%
for each $U,V\in \Gamma (\ker F_{\ast }),$ where $\theta $ is slant angle.

On the other hand,%
\begin{equation}
\mathcal{V}(\nabla _{U}QV)=-\cos ^{2}\theta (\hat{\nabla}_{U}V-\eta (\hat{%
\nabla}_{U}V)\xi +g(V,\psi U)\xi +\eta (V)\psi U).  \label{Q2}
\end{equation}%
So, from (\ref{Q1}) and $\nabla Q=0$ if and only if $\cos ^{2}\theta
(g(V,\psi U)\xi +\eta (V)\psi U)=0$ which implies that $\psi U=0$ or $\theta
=\frac{\pi }{2}$. Both the cases verify that $F$ is an anti-invariant
submersion.
\end{proof}

We now investigate the geometry of leaves of ($\ker F_{\ast })^{\perp }$ and 
$\ker F_{\ast }.$

\begin{proposition}
Let $F$ be a slant Riemannian submersion from a Sasakian manifold $M(\phi
,\xi ,\eta ,g_{M})$ onto a Riemannian manifold $(N,g_{N}).$Then the
distribution $(\ker F_{\ast })^{\bot }$ defines a totally geodesic foliation
on $M$ if and only if 
\begin{equation*}
g_{M}(\mathcal{H}\nabla _{X}Y,\omega \psi U)-\sin ^{2}\theta g_{M}(Y,\phi
X)\eta (U)=g_{M}(\mathcal{A}_{X}\mathcal{B}Y,\omega U)+g_{M}(\mathcal{H}%
\nabla _{X}\mathcal{C}Y,\omega U)
\end{equation*}%
for any $X,Y\in \Gamma ((\ker F_{\ast })^{\bot })$ and $U\in \Gamma (\ker
F_{\ast }).$
\end{proposition}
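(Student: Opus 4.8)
The plan is to use the standard criterion: $(\ker F_{\ast })^{\bot }$ defines a totally geodesic foliation on $M$ if and only if $\nabla _{X}Y\in \Gamma ((\ker F_{\ast })^{\bot })$ for all $X,Y\in \Gamma ((\ker F_{\ast })^{\bot })$, i.e.\ if and only if $g_{M}(\nabla _{X}Y,V)=0$ for every $V\in \Gamma (\ker F_{\ast })$. So the task reduces to rewriting $g_{M}(\nabla _{X}Y,V)$ in terms of $\psi ,\omega ,\mathcal{B},\mathcal{C}$ and the O'Neill tensors. I will work under the assumption that $\xi $ is vertical (otherwise $F$ is anti-invariant by Theorem 1, and the stated identity degenerates accordingly); this gives $\eta (Y)=g_{M}(Y,\xi )=0$ and, from (\ref{nablaXxi}), the useful identity $g_{M}(\nabla _{X}Y,\xi )=g_{M}(Y,\phi X)$.

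First I would push a $\phi ^{2}$ inside: writing $V=-\phi ^{2}V+\eta (V)\xi $ by (\ref{phi^2}), using that $\phi $ is skew-symmetric, and then (\ref{Nambla fi}) together with $\phi \xi =0$ and $\eta (Y)=0$, one arrives at
\begin{equation*}
g_{M}(\nabla _{X}Y,V)=g_{M}(\nabla _{X}\phi Y,\phi V)+\eta (V)\,g_{M}(Y,\phi X).
\end{equation*}
Substituting $\phi Y=\mathcal{B}Y+\mathcal{C}Y$ and $\phi V=\psi V+\omega V$, splitting $\nabla _{X}\mathcal{B}Y$, $\nabla _{X}\mathcal{C}Y$ into their horizontal and vertical parts via (\ref{3}) and (\ref{4}), and matching components against the vertical $\psi V$ and the horizontal $\omega V$, this becomes
\begin{equation*}
g_{M}(\nabla _{X}Y,V)=g_{M}(\mathcal{A}_{X}\mathcal{B}Y,\omega V)+g_{M}(\mathcal{H}\nabla _{X}\mathcal{C}Y,\omega V)+g_{M}(\nabla _{X}\phi Y,\psi V)+\eta (V)\,g_{M}(Y,\phi X).
\end{equation*}

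The heart of the proof is the treatment of the remaining term $g_{M}(\nabla _{X}\phi Y,\psi V)$. Since $\eta (\psi V)=0$, relation (\ref{metric}) gives $g_{M}(\nabla _{X}\phi Y,\psi V)=g_{M}(\phi \nabla _{X}\phi Y,\phi \psi V)$. Applying (\ref{Nambla fi}) a second time, using $\phi ^{2}Y=-Y$ and $\eta (\phi Y)=0$, yields $\phi \nabla _{X}\phi Y=-\nabla _{X}Y-g_{M}(X,\phi Y)\xi $, while the slant characterization (\ref{SLANT}) gives $\phi \psi V=-\cos ^{2}\theta \,(V-\eta (V)\xi )+\omega \psi V$. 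Expanding $g_{M}(\phi \nabla _{X}\phi Y,\phi \psi V)$ and dropping the terms that vanish because $\xi $ is vertical, I obtain
\begin{equation*}
g_{M}(\nabla _{X}\phi Y,\psi V)=\cos ^{2}\theta \,g_{M}(\nabla _{X}Y,V)-\cos ^{2}\theta \,\eta (V)g_{M}(Y,\phi X)-g_{M}(\mathcal{H}\nabla _{X}Y,\omega \psi V).
\end{equation*}
Plugging this into the previous display and collecting the $g_{M}(\nabla _{X}Y,V)$ terms gives
\begin{equation*}
\sin ^{2}\theta \,g_{M}(\nabla _{X}Y,V)=g_{M}(\mathcal{A}_{X}\mathcal{B}Y,\omega V)+g_{M}(\mathcal{H}\nabla _{X}\mathcal{C}Y,\omega V)+\sin ^{2}\theta \,\eta (V)g_{M}(Y,\phi X)-g_{M}(\mathcal{H}\nabla _{X}Y,\omega \psi V),
\end{equation*}
and reading off when the left-hand side vanishes for all $V$ is exactly the stated equivalence.

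I expect the main obstacle to be the second application of (\ref{Nambla fi}) above: one must keep careful track of the correction terms $g_{M}(X,Y)\xi $, $\eta (Y)X$, $g_{M}(X,\phi Y)\xi $ and $\eta (\phi Y)X$, checking that each is killed once paired with $\phi V$, $\phi \psi V$ or $\xi $, which uses $\phi \xi =0$, $\eta (Y)=0$ and the horizontality of $Y$ repeatedly. One minor caveat is the invariant case $\theta =0$: there $\omega \equiv 0$ makes both sides of the stated identity vanish identically, so the equivalence is only meaningful away from the invariant case (proper slant or anti-invariant), where $\sin ^{2}\theta \neq 0$ and $\sin ^{2}\theta \,g_{M}(\nabla _{X}Y,V)=0$ is genuinely equivalent to $g_{M}(\nabla _{X}Y,V)=0$.
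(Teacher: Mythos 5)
Your proof is correct and follows essentially the same route as the paper's: both reduce the condition to $g_{M}(\nabla _{X}Y,U)=0$, apply the Sasakian identity (\ref{Nambla fi}) twice together with the decompositions $\phi U=\psi U+\omega U$, $\phi Y=\mathcal{B}Y+\mathcal{C}Y$ and the slant relation $\psi ^{2}=-\cos ^{2}\theta (I-\eta \otimes \xi )$, arriving at the same key identity $\sin ^{2}\theta \,g_{M}(\nabla _{X}Y,U)=\sin ^{2}\theta \,g_{M}(Y,\phi X)\eta (U)-g_{M}(\mathcal{H}\nabla _{X}Y,\omega \psi U)+g_{M}(\mathcal{A}_{X}\mathcal{B}Y,\omega U)+g_{M}(\mathcal{H}\nabla _{X}\mathcal{C}Y,\omega U)$. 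Your explicit caveats (that $\xi $ should be vertical, and that the equivalence degenerates in the invariant case $\theta =0$) are points the paper leaves implicit, but they do not change the argument.
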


\begin{proof}
From (\ref{Nambla fi}) and (\ref{TAN})we have%
\begin{eqnarray}
g_{M}(\nabla _{X}Y,U) &=&-g_{M}(\phi \nabla _{X}\phi Y,U)+g_{M}(Y,\phi
X)\eta (U)  \label{TOTA} \\
&=&g_{M}(\nabla _{X}\phi Y,\phi U)+g_{M}(Y,\phi X)\eta (U)  \notag \\
&=&g_{M}(\nabla _{X}\phi Y,\psi U)+g_{M}(\nabla _{X}\phi Y,\omega
U)+g_{M}(Y,\phi X)\eta (U).  \notag
\end{eqnarray}%
for any $X,Y\in \Gamma ((\ker F_{\ast })^{\bot })$ and $\ U\in \Gamma (\ker
F_{\ast })$.

Using (\ref{Nambla fi}) and (\ref{TAN}) in (\ref{TOTA}), we obtain 
\begin{eqnarray}
g_{M}(\nabla _{X}Y,U) &=&-g_{M}(\nabla _{X}Y,\psi ^{2}U)-g_{M}(\nabla
_{X}Y,\omega \psi U)  \label{TOT3} \\
&&+g_{M}(Y,\phi X)\eta (U)+g_{M}(\nabla _{X}\phi Y,\omega U).  \notag
\end{eqnarray}%
By (\ref{NOR}) and (\ref{SLANT}) we have%
\begin{eqnarray}
g_{M}(\nabla _{X}Y,U) &=&\cos ^{2}\theta g_{M}(\nabla _{X}Y,U)-\cos
^{2}\theta \eta (U)\eta (\nabla _{X}Y)  \label{TOT4} \\
&&-g_{M}(\nabla _{X}Y,\omega \psi U)+g_{M}(Y,\phi X)\eta (U)  \notag \\
&&+g_{M}(\nabla _{X}\mathcal{B}Y,\omega U)+g_{M}(\nabla _{X}\mathcal{C}%
Y,\omega U)  \notag
\end{eqnarray}%
Using (\ref{3}), (\ref{4}) and (\ref{nablaXxi}) in the last equation we
obtain%
\begin{eqnarray*}
\sin ^{2}\theta g_{M}(\nabla _{X}Y,U) &=&\sin ^{2}\theta g_{M}(Y,\phi X)\eta
(U)-g_{M}(\mathcal{H}\nabla _{X}Y,\omega \psi U) \\
&&+g_{M}(\mathcal{A}_{X}\mathcal{B}Y,\omega U)+g_{M}(\mathcal{H}\nabla _{X}%
\mathcal{C}Y,\omega U)
\end{eqnarray*}%
which prove the theorem.
\end{proof}

\begin{proposition}
Let $F$ be a slant Riemannian submersion from a Sasakian manifold $M(\phi
,\xi ,\eta ,g_{M})$ onto a Riemannian manifold $(N,g_{N}).$If the
distribution $\ker F_{\ast }$ defines a totally geodesic foliation on $M$
then $F$ is an invariant submersion.
\end{proposition}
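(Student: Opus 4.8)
The plan is to reduce the hypothesis to a vanishing statement for the O'Neill tensor $\mathcal{T}$ and then quote the identity $\mathcal{T}_U\xi=-\omega U$ already obtained in (\ref{CONNECTION}). By (\ref{1}), for vertical $U,V$ we have $\nabla_U V=\mathcal{T}_U V+\hat{\nabla}_U V$ with $\mathcal{T}_U V$ horizontal and $\hat{\nabla}_U V$ vertical; hence $\ker F_*$ defines a totally geodesic foliation on $M$ exactly when $\mathcal{T}_U V=0$ for all $U,V\in\Gamma(\ker F_*)$. Since in the present setting $\xi$ is vertical (this is implicit in the definition of slant submersion, where the slant angle is only prescribed on $\Gamma(\ker F_*)-\{\xi\}$), we may specialize $V=\xi$ and obtain $\mathcal{T}_U\xi=0$ for every $U\in\Gamma(\ker F_*)$.

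Next I would combine this with (\ref{CONNECTION}), which gives $\mathcal{T}_U\xi=-\omega U$; therefore $\omega U=0$ for all $U\in\Gamma(\ker F_*)$. Writing $\phi U=\psi U+\omega U$, this says $\phi U=\psi U\in\Gamma(\ker F_*)$, i.e. $\phi(\ker F_*)\subseteq\ker F_*$, which is precisely the assertion that $F$ is an invariant submersion (slant angle $\theta=0$). As an alternative finish, feeding $\omega\equiv 0$ into (\ref{COS6}) with $U=V$ not proportional to $\xi$ forces $\sin^2\theta\,(g_M(U,U)-\eta(U)^2)=0$, and since $g_M(U,U)-\eta(U)^2>0$ there, we again conclude $\theta=0$.

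There is essentially no serious obstacle: the argument is a short consequence of the structural identity $\mathcal{T}_U\xi=-\omega U$ together with the observation that a totally geodesic fiber foliation forces $\mathcal{T}$ to vanish on pairs of vertical vector fields. The only points deserving a sentence of justification are (i) the equivalence between ``$\ker F_*$ defines a totally geodesic foliation'' and ``$\mathcal{T}$ vanishes on $\Gamma(\ker F_*)\times\Gamma(\ker F_*)$'', which is read off from (\ref{1}), and (ii) that $\xi$ is vertical, so that taking $V=\xi$ is legitimate; both are immediate from the conventions fixed in Sections 2 and 4.
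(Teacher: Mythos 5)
Your argument is correct and is essentially the paper's own proof: the paper likewise deduces $\omega U=0$ directly from $\mathcal{T}_U\xi=-\omega U$ in (\ref{CONNECTION}) once the totally geodesic hypothesis forces $\mathcal{T}$ to vanish on vertical pairs. You merely spell out the two justifications (the equivalence with $\mathcal{T}|_{\mathcal{V}\times\mathcal{V}}=0$ and the verticality of $\xi$) that the paper leaves implicit.
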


\begin{proof}
By (\ref{CONNECTION}), if the distribution $\ker F_{\ast }$ defines a
totally geodesic foliation on $M$ then we conclude that $\omega U=0$ for any 
$U\in \Gamma (\ker F_{\ast })$ which shows that $F$ is an invariant
submersion.
\end{proof}

\textbf{OpenProblem:}

Let $F$ be a slant Riemannian submersion from a Sasakian manifold $M(\phi
,\xi ,\eta ,g_{M})$ onto a Riemannian manifold $(N,g_{N})$. In \cite%
{alfonso2},Barrera et.al. define and study the Maslov form of non-invariant
slant submanifolds of $S$-space form $\tilde{M}(c)$. They find conditions
for it to be closed. By similar discussion in \cite{alfonso2} we can define
Maslov form $\Omega H$ of $M$ as the dual form of the vector field $\mathcal{%
B}H$, that is,%
\begin{equation*}
\Omega H(U)=g_{M}(U,\mathcal{B}H)
\end{equation*}

for any $U\in \Gamma (\ker F_{\ast }).$ So.it will be interesting for giving
a chararacterization respect to $\Omega H$ for slant submersions, where $%
H=\dsum\limits_{i=1}^{m-\frac{n}{2}}\mathcal{T}_{e_{i}}e_{i}+\mathcal{T}%
_{\sec \theta \psi e_{i}}\sec \theta \psi e_{i}$ and

$\left\{ e_{1},\sec \theta \psi e_{1},e_{2},\sec \theta \psi
e_{2},...e_{k},\sec \theta \psi e_{k},\xi \right\} $ is a local orthonormal
basis of $(\ker F_{\ast }).$

\end{document}